\documentclass[11pt]{article}

\usepackage{amsmath, amssymb, amsthm}
\usepackage[utf8]{inputenc}
\usepackage[T1]{fontenc}
\usepackage{geometry}
\usepackage{hyperref}
\usepackage{array}
\usepackage{booktabs}
\usepackage{listings}
\usepackage{xcolor}

\definecolor{codegray}{rgb}{0.5,0.5,0.5}
\definecolor{codegreen}{rgb}{0,0.5,0}
\definecolor{codepurple}{rgb}{0.58,0,0.82}
\definecolor{codeblue}{rgb}{0.13,0.13,0.55}
\definecolor{backcolour}{rgb}{0.97,0.97,0.97}

\lstdefinestyle{pythonstyle}{
    backgroundcolor=\color{backcolour},
    commentstyle=\color{codegreen}\itshape,
    keywordstyle=\color{codeblue}\bfseries,
    stringstyle=\color{codepurple},
    numberstyle=\tiny\color{codegray},
    basicstyle=\ttfamily\footnotesize,
    breakatwhitespace=false,
    breaklines=true,
    captionpos=b,
    keepspaces=true,
    numbers=left,
    numbersep=5pt,
    showspaces=false,
    showstringspaces=false,
    showtabs=false,
    tabsize=4,
    frame=single,
    framerule=0.4pt,
    rulecolor=\color{codegray},
    language=Python,
    xleftmargin=15pt,
    xrightmargin=5pt,
}
\newtheorem{theorem}{Theorem}[section]
\newtheorem{lemma}[theorem]{Lemma}
\newtheorem{proposition}[theorem]{Proposition}
\newtheorem{corollary}[theorem]{Corollary}
\theoremstyle{definition}
\newtheorem{definition}[theorem]{Definition}

\theoremstyle{remark}
\newtheorem{remark}[theorem]{Remark}

\newcommand{\C}{\mathbb{C}}
\newcommand{\R}{\mathbb{R}}
\newcommand{\D}{\mathbb{D}}
\newcommand{\HH}{\mathbb{H}}
\newcommand{\Az}{\mathcal{A}_z}
\newcommand{\dbar}{\bar{\partial}}

\newcommand{\ii}{\mathfrak{i}}
\newcommand{\Acal}{\mathcal{A}}
\newcommand{\Bcal}{\mathcal{B}}
\newcommand{\Fcal}{\mathcal{F}}
\newcommand{\Mcal}{\mathcal{M}}
\newcommand{\BV}{\mathcal{BV}}
\newcommand{\Gcal}{\mathcal{G}}

\title{The Pseudo-Analytic Mass of a Beltrami--Vekua Equation}
\author{Daniel Alay\'on-Solarz\thanks{\texttt{danieldaniel@gmail.com}}}
\date{May 2026}

\begin{document}

\maketitle

\begin{abstract}
Every smooth first-order real planar elliptic system admits a universal complex form $w_{\bar z} - \mu w_z + \mathcal{A} w + \mathcal{B} \bar w = \mathcal{F}$, which we call the \emph{Beltrami--Vekua equation}: the data $(\mu, \mathcal{A}, \mathcal{B}, \mathcal{F})$ are produced from the original system by algebraic operations and differentiations, with no auxiliary PDE. On this space we study the joint action of multiplicative gauges $w \mapsto \phi w$ and orientation-preserving diffeomorphisms. Our main result is that the 2-form $\Theta = |\mathcal{B}|^2 / (1 - |\mu|^2) \, dx \, dy$ is gauge-invariant and pulls back covariantly under diffeomorphisms; its form is forced, with $|\mathcal{B}|^2$ the unique $\mathcal{B}$-quadratic combination invariant under $\mathcal{B} \mapsto \mathcal{B}\phi/\bar\phi$ and $1 - |\mu|^2$ the conformal distortion factor from the diffeomorphism law for $\mu$. The total mass $\mathcal{M}(D) = \int_\Omega \Theta$, the \emph{pseudo-analytic mass}, vanishes precisely on the analytic class $\mathcal{B} \equiv 0$ and separates a continuous family of pairwise inequivalent pseudo-analytic equations on the disk. As a by-product, Vekua's two-stage reduction --- uniformization then gauge elimination --- requires only one variable-coefficient PDE solve: the Beltrami diffeomorphism supplies the integrating factor for a flat $\bar\partial$-equation.
\end{abstract}

\section{Introduction}\label{sec:intro}

A first-order real elliptic system on a planar domain $\Omega \subset \R^2$ has --- after the standard reduction to skeleton form --- eight real coefficients governing two real unknowns $u, v$. Such a system can always be rewritten as a single complex equation in one complex unknown $w = u + iv$, but the rewriting is not unique: different bundling strategies produce structurally different complex forms, each making different trade-offs between algebraic cleanliness and the regularity demanded of the data. Three such forms occupy distinct positions on this spectrum.

\medskip
\noindent
\textbf{Vekua's form} $w_{\bar z} + Aw + B\bar w = F$ \cite{vekua, bers} is reached by first solving a Beltrami equation to uniformize the principal part, then reading off the lower-order data in the new coordinates. The principal part is consumed in the first stage; the resulting form is algebraically clean --- one principal-part term, $\bar w$ in the conjugate slot --- and admits the rich function theory of generalized analytic / pseudoanalytic functions. The price is the PDE solve, which forces smoothness on the principal-part coefficients (Vekua works in $D_{m+1,p}$ on the principal part, $D_{m,p}$ on lower-order data; see \cite[\S 7.1]{vekua}).

\medskip
\noindent
\textbf{Bojarski's form} $w_z - q_1 w_{\bar z} - q_2 \overline{w_{\bar z}} = Aw + B\bar w + C$ \cite{bojarski} is reached by an algebraic elimination: complexify the skeleton, take the conjugate equation, and use the pair to eliminate $w_z$ without solving any PDE. The construction descends to measurable coefficients $q_1, q_2$ with $|q_1| + |q_2| < 1$ --- the natural setting for quasiconformal mappings with merely measurable Beltrami data. The price is that the principal part now carries two coefficients, and the conjugate slot holds $\overline{w_{\bar z}}$ rather than $\bar w$.

\medskip
\noindent
\textbf{The Beltrami--Vekua form} taken up in the present paper,
\begin{equation}\label{eq:BV-intro}
w_{\bar z} - \mu\, w_z + \Acal\, w + \Bcal\, \bar w = \Fcal, \qquad |\mu| < 1,
\end{equation}
sits between these two. Like Bojarski's form it is \emph{pre-uniformized}: the data $(\mu, \Acal, \Bcal, \Fcal)$ are constructed from the eight coefficients and two forcings of the original system by a finite sequence of algebraic operations and differentiations (Section~\ref{sec:BV-derivation}), with no PDE solved at any stage. Like Vekua's form it has a single principal-part term and $\bar w$ --- not $\overline{w_{\bar z}}$ --- in the conjugate slot. The cost of having both features at once is a single $C^1$ derivative on the principal-part data, paid at Step~2 when the structural obstruction $G$ is computed; the lower-order data and forcing need only be $C^0$.

The thesis of this paper is that the BV form combines Bojarski's epistemic stance --- keep the principal part inside the equation and study it jointly with the lower-order data --- with Vekua's algebraic form, and that this combination unlocks an invariant theory that neither classical form supports.

\medskip
\noindent
\textbf{Two natural equivalence relations.} The BV form \eqref{eq:BV-intro} is universal but not canonical: the data $(\mu, \Acal, \Bcal, \Fcal)$ depend on the gauge of the unknown $w$ and on the choice of coordinates on the domain. Two equations related by a multiplicative substitution $w \mapsto \phi w$ with $\phi$ a nowhere-vanishing $C^1$ function, or by an orientation-preserving $C^1$ diffeomorphism of the domain, describe the same underlying object. Sections~\ref{sec:gauge} and~\ref{sec:diffeo} establish that gauge transformations and diffeomorphisms each define equivalence relations on the space of BV equations and compute their action on the data.

\medskip
\noindent
\textbf{Main result: the pseudo-analytic mass.} The principal contribution of this paper is the identification of a natural gauge--diffeomorphism invariant density on the pre-uniformized BV data. Section~\ref{sec:dichotomy} first extracts an immediate Boolean consequence of the transformation laws: the locus $\{\Bcal = 0\}$ is invariant under both actions, separating an \emph{analytic} class ($\Bcal \equiv 0$) from a \emph{pseudo-analytic} class ($\Bcal \not\equiv 0$), in the terminology of Vekua and Bers. Section~\ref{sec:invariant} promotes this Boolean invariant to a continuous one. The 2-form
\begin{equation}\label{eq:Theta-intro}
\Theta := \frac{|\Bcal|^2}{1 - |\mu|^2}\, dx\, dy
\end{equation}
is gauge-invariant and pulls back covariantly under orientation-preserving diffeomorphisms (Theorem~\ref{thm:main-invariance}). The form of \eqref{eq:Theta-intro} is forced, not chosen: the numerator $|\Bcal|^2$ is the unique $\Bcal$-quadratic combination invariant under the phase-only gauge action $\Bcal \mapsto \Bcal\,\phi/\bar\phi$, and the denominator $1 - |\mu|^2$ is exactly the conformal distortion factor produced by the diffeomorphism law for $\mu$ via the closure identity $1 - |\mu^*|^2 = (1 - |\mu \circ \Phi|^2)\,J/|K|^2$ (Section~\ref{sec:diffeo}). The cancellation between these two transformation factors --- one from the conjugate Vekua coupling, one from the Beltrami principal part --- is the mathematical hinge of the paper. Its total mass
\begin{equation}\label{eq:mass-intro}
\Mcal(D) := \int_\Omega \Theta \;\in\; [0, \infty]
\end{equation}
is therefore an invariant of the gauge-diffeomorphism equivalence class of $D = (\mu, \Acal, \Bcal, \Fcal)$. We call $\Mcal(D)$ the \emph{pseudo-analytic mass}. It vanishes exactly on the analytic class and is strictly positive on the pseudo-analytic class; in particular, two equations with different mass cannot be equivalent. Section~\ref{sec:examples} exhibits a continuous one-parameter family of pairwise inequivalent pseudo-analytic equations on the disk distinguished by their mass, and shows that equal mass does not imply equivalence (the zero locus of $\Bcal$ is a separate invariant).

\medskip
\noindent
\textbf{What the pre-uniformized stance buys.} The invariant $\Theta$ is intrinsic to the pre-uniformized object: it is visible directly from $(\mu, \Bcal)$ without ever solving the Beltrami equation. After uniformization $\mu \mapsto 0$, the denominator in \eqref{eq:Theta-intro} becomes trivial and the invariant collapses to $|\tilde\Bcal|^2\,d\xi\,d\eta$ in the new coordinates --- where it is no longer recognizable as the pullback of a more refined object on the pre-uniformized domain. The mass is therefore not an artifact of the BV bundling but a genuine feature of the equivalence class that the classical Vekua reduction silently discards. This is what the pre-uniformized stance buys: invariants that the uniformizing solve renders invisible.

\medskip
\noindent
\textbf{The classical reduction inside BV.} Equipped with both equivalence relations, the classical two-stage reduction --- (i) solve the Beltrami equation to uniformize, (ii) gauge away the resulting $w$-coupling --- becomes a single composite operation. Section~\ref{sec:reduction} carries it out and notes a structural reorganization of the classical pipeline: the diffeomorphism $\Phi$ that solves $\Phi_{\bar z} = \mu \Phi_z$ already supplies the integrating factor for the gauge step. What appears in the classical literature as two independent variable-coefficient PDE problems is, in the BV framework, one --- the second stage reduces, after pullback through $\Phi$, to a flat $\bar\partial$-equation solvable by the standard Cauchy transform (Proposition~\ref{prop:single-solve}). The classical normal form $(0, 0, \tilde\Bcal, \tilde\Fcal)$ falls out as a corollary. This observation is conceptual rather than technical; the deeper contribution remains the invariant $\Theta$.

\medskip
\noindent
\textbf{Why $\Fcal$ does not yield an analogous mass density.} A natural question is whether the forcing term $\Fcal$ supports a similar construction. Lemma~\ref{lem:no-F-invariant} shows that no density of the displayed algebraic form built from $|\Fcal|^2$ can be gauge-invariant: under gauge, $\Fcal$ is multiplied by an unconstrained scalar $\phi$, while $\Bcal$ is multiplied by the unimodular factor $\phi/\bar\phi$. The asymmetry in the gauge action --- multiplicative-by-arbitrary-scalar versus multiplicative-by-phase --- is the structural reason $|\Bcal|$ admits an intrinsic 2-form while $|\Fcal|$ does not at this algebraic level. (The zero locus of $\Fcal$ is, however, preserved by both actions, just as for $\Bcal$.)

\medskip
\noindent
\textbf{Regularity.} The pipeline of Section~\ref{sec:BV-derivation} differentiates the principal-part coefficients of the original real system once, so we work throughout in the smooth class of Definition~\ref{def:smooth-class}: $C^1$ on the principal part, $C^0$ on lower-order data and forcing. This is the natural regularity for any diffeomorphism-based theory in this area --- Vekua's classical $D_{m+1,p}$ / $D_{m,p}$ split is the Sobolev counterpart, and the $C^1$ requirement on the principal part is its classical subset. The algebraic portions of the present construction (gauge laws, fiber-Vekua matching, Cayley conversion) admit natural Sobolev analogues with pointwise identities interpreted a.e.; a full Sobolev formulation of the diffeomorphism action, the Cauchy-transform step, and the associated invariant theory requires additional functional-analytic bookkeeping and is not pursued here. Bojarski's complementary route to measurable coefficients eliminates one Wirtinger derivative algebraically rather than by a coordinate change \cite{bojarski}, at the cost of the doubled principal part discussed above; the bundle map between his form and the BV form exists --- both descend from the same skeleton --- but the gauge-diffeomorphism invariant theory developed here does not transport, since the action $\Bcal \mapsto \Bcal\,\phi/\bar\phi$ that produces pointwise gauge-invariance of $|\Bcal|$ requires the $\bar w$ coupling rather than $\overline{w_{\bar z}}$.

\medskip
\noindent
\textbf{Outline.} Section~\ref{sec:real-system} fixes the real system and its ellipticity condition. Section~\ref{sec:BV-derivation} derives the BV form through a seven-step pipeline. Sections~\ref{sec:gauge} and~\ref{sec:diffeo} establish the gauge and diffeomorphism actions. Section~\ref{sec:reduction} carries out the classical reduction inside the BV framework and proves the single-PDE-solve proposition. Section~\ref{sec:dichotomy} extracts the analytic / pseudo-analytic dichotomy. Section~\ref{sec:invariant} constructs $\Theta$, proves the main invariance theorem, and rules out an $\Fcal$-analog at the algebraic level. Section~\ref{sec:mass} develops the pseudo-analytic mass. Section~\ref{sec:examples} carries out explicit computations. 

\section{The real system and the smooth class}\label{sec:real-system}

We consider two real-valued unknowns $u(x,y)$ and $v(x,y)$ on a domain $\Omega \subset \R^2$ with complex coordinate $z = x + iy$, governed by
\begin{equation}\label{eq:real-system}
\begin{cases}
-v_y + a_{11}\, u_x + a_{12}\, u_y + a_{13}\, u + a_{14}\, v = f_1, \\[4pt]
\phantom{-}v_x + a_{21}\, u_x + a_{22}\, u_y + a_{23}\, u + a_{24}\, v = f_2,
\end{cases}
\end{equation}
where the eight coefficients $a_{ij}(x,y)$ and the two forcing terms $f_k(x,y)$ are real-valued. The principal-part coefficients $a_{i1}, a_{i2}$ are assumed $C^1$ (they will be differentiated in Section~\ref{sec:BV-derivation}); the lower-order $a_{i3}, a_{i4}$ and $f_k$ need only be $C^0$.

The principal-part symbol is
\[
\sigma(\xi) = \begin{pmatrix} a_{11}\,\xi_1 + a_{12}\,\xi_2 & -\xi_2 \\ a_{21}\,\xi_1 + a_{22}\,\xi_2 & \xi_1 \end{pmatrix},
\]
with determinant $a_{11}\xi_1^2 + (a_{12}+a_{21})\xi_1\xi_2 + a_{22}\xi_2^2$. \emph{Ellipticity} means $\det\sigma(\xi) \neq 0$ for all real $\xi \neq 0$, equivalently
\begin{equation}\label{eq:ellipticity}
a_{11} > 0, \qquad a_{11} a_{22} - \tfrac{1}{4}(a_{12}+a_{21})^2 > 0,
\end{equation}
which forces $a_{22} > 0$. We assume \eqref{eq:ellipticity} holds throughout $\Omega$.

\begin{definition}[Smooth first-order planar elliptic system]\label{def:smooth-class}
A \emph{smooth first-order planar elliptic system} on $\Omega$ is a system of the form \eqref{eq:real-system} satisfying the ellipticity condition \eqref{eq:ellipticity}, with principal-part coefficients $a_{i1}, a_{i2} \in C^1(\Omega)$ and lower-order coefficients $a_{i3}, a_{i4}$ and forcing terms $f_k$ in $C^0(\Omega)$.
\end{definition}

This is the natural regularity class for the moving-algebra construction of Section~\ref{sec:BV-derivation} and for the gauge--diffeomorphism actions of Sections~\ref{sec:gauge}--\ref{sec:diffeo}: the principal part is differentiated exactly once (in computing the obstruction $G$), and the lower-order data enters only algebraically. The algebraic step (the seven-step pipeline of Section~\ref{sec:BV-derivation}) has a natural Sobolev counterpart in the sense of Vekua \cite[\S 7.1]{vekua} ($D_{m+1,p}$ on the principal part, $D_{m,p}$ on lower-order data), with pointwise identities replaced by their a.e.\ counterparts; the analytic apparatus of Sections~\ref{sec:diffeo}--\ref{sec:reduction} then requires the standard quasiconformal and Cauchy-transform machinery in those function spaces. We do not pursue this extension here.

\begin{remark}
The skeleton $(-v_y, v_x)$ is chosen so that the standard Cauchy--Riemann system $u_x - v_y = 0$, $v_x + u_y = 0$ fits \eqref{eq:real-system} with $a_{11} = a_{22} = 1$, $a_{12} = a_{21} = 0$, and all lower-order and forcing terms zero. The skeleton form is no restriction, locally and (when ellipticity provides a consistent global choice of nondegenerate first-order block) globally: any first-order $2 \times 2$ real elliptic system reduces to \eqref{eq:real-system} after a suitable choice of dependent-variable basis, by multiplying through by the inverse of the resulting matrix in front of $(v_x, v_y)$, whose determinant is non-zero by ellipticity. We work directly with \eqref{eq:real-system} throughout. This skeleton form is also the starting point of Vekua \cite[eq.~(7.5)]{vekua} and Bojarski \cite[eq.~(2.1)]{bojarski}; the divergence between their complex rewritings and ours occurs at the bundling step, not at the choice of skeleton.
\end{remark}

\section{Derivation of the Beltrami--Vekua form}\label{sec:BV-derivation}

We derive \eqref{eq:BV-intro} from \eqref{eq:real-system} through a seven-step pipeline. Each step is algebraic or consists of differentiating given data; no PDE is solved at any stage. We state each step as concisely as journal form permits; the construction passes through the moving algebra of variable elliptic structures developed in \cite{ves}, and we refer there for further context on the geometric meaning of the intermediate objects.

\subsection{Step 1: Structure data}\label{ssec:structure}

From the four principal-part coefficients we form the dimensionless ratios
\begin{equation}\label{eq:alpha-beta}
\alpha := \frac{a_{22}}{a_{11}}, \qquad \beta := -\frac{a_{12} + a_{21}}{a_{11}},
\end{equation}
and the \emph{structure polynomial} $p(s) := s^2 + \beta s + \alpha$. Its discriminant is $\Delta := 4\alpha - \beta^2$, and ellipticity \eqref{eq:ellipticity} is equivalent to $\Delta > 0$. The upper-half-plane root,
\begin{equation}\label{eq:lambda-def}
\lambda := \frac{-\beta + i\sqrt{\Delta}}{2} \in \HH^+,
\end{equation}
satisfies $\lambda^2 + \beta\lambda + \alpha = 0$ in $\C$, with $\lambda + \bar\lambda = -\beta$ and $\lambda\bar\lambda = \alpha$. The Cayley transform sends $\lambda$ to
\begin{equation}\label{eq:mu-def}
\mu := \frac{\lambda - i}{\lambda + i} \in \D,
\end{equation}
the \emph{Beltrami coefficient}. Two identities of $\mu$ in terms of $\lambda$ will be essential at Step~7:
\begin{equation}\label{eq:cayley}
\frac{1 + i\lambda}{1 - i\lambda} = -\mu, \qquad \frac{2}{1 - i\lambda} = 1 - \mu.
\end{equation}

\subsection{Step 2: The intrinsic obstruction}\label{ssec:obstruction}

The structure polynomial defines, at each $(x,y) \in \Omega$, the commutative $\R$-algebra
\begin{equation}\label{eq:Az-def}
\Az := \R[X]/(X^2 + \beta X + \alpha),
\end{equation}
with generator class $\ii := [X]$ satisfying $\ii^2 = -\beta\ii - \alpha$. Because $\alpha, \beta$ depend on $(x,y)$, so does $\ii$: the generator \emph{moves} with position. The natural measure of this movement is
\begin{equation}\label{eq:G-def}
G := \ii_x + \ii\, \ii_y \in \Az.
\end{equation}
Writing $G = G_1 + G_2 \ii$ with $G_1, G_2$ real, implicit differentiation of $\ii^2 + \beta\ii + \alpha = 0$ followed by inversion of the factor $2\ii + \beta$ in $\Az$ gives
\begin{equation}\label{eq:G-explicit}
G_1 = \frac{\beta P - 2\alpha Q}{\Delta}, \qquad G_2 = \frac{2P - \beta Q}{\Delta},
\end{equation}
where
\begin{equation}\label{eq:PQ-def}
P := \alpha_x - \alpha\beta_y, \qquad Q := \beta_x + \alpha_y - \beta\beta_y.
\end{equation}
The pair $(G_1, G_2)$ admits a parallel description in the spectral plane: a direct calculation shows that the transport defect $T := \lambda_x + \lambda\lambda_y$, computed in $\C$, satisfies $T = G_1 + G_2 \lambda$. We call $G$ the \emph{intrinsic obstruction}; the structure is \emph{rigid} when $G \equiv 0$. The smoothness hypothesis of Definition~\ref{def:smooth-class} is precisely what makes $G$ a well-defined $C^0$ section of $\Az$.

\subsection{Step 3: Canonical form}\label{ssec:canonical}

The change of dependent variables
\begin{equation}\label{eq:UV-def}
U := a_{22}\, u, \qquad V := v - a_{12}\, u
\end{equation}
brings \eqref{eq:real-system} to
\begin{equation}\label{eq:canonical}
\begin{cases}
U_x - \alpha\, V_y + a\, U + b\, V = f, \\[4pt]
V_x + U_y - \beta\, V_y + c\, U + d\, V = g,
\end{cases}
\end{equation}
with coefficients given by
\begin{equation}\label{eq:canonical-coeffs}
\begin{aligned}
a &= \tfrac{1}{a_{11}}\bigl[a_{13} + a_{12}a_{14} - (a_{12})_y\bigr] - \tfrac{(a_{22})_x}{a_{22}}, \\
b &= \alpha\, a_{14}, \\
c &= \tfrac{1}{a_{22}}\bigl[a_{23} + a_{12}a_{24} + (a_{12})_x - (a_{22})_y\bigr] - \tfrac{(a_{12}+a_{21})}{a_{11}a_{22}}\bigl[a_{13} + a_{12}a_{14} - (a_{12})_y\bigr], \\
d &= a_{24} + \beta\, a_{14}, \\
f &= \alpha\, f_1, \qquad g = f_2 + \beta\, f_1.
\end{aligned}
\end{equation}

\medskip
The substitution \eqref{eq:UV-def} together with the structure-polynomial algebra $\Az = \R[X]/(X^2 + \beta X + \alpha)$ was introduced in the author's earlier note \cite{alayon-solarz2011}. The construction proceeds through a specific sequence of moves, none of which is forced by general principles: first the substitution $U = a_{22}u$, $V = v - a_{12}u$, which is engineered to exploit the positivity $a_{22} > 0$ guaranteed by ellipticity; then a multiplication of the first equation by $\beta$ and addition to the second, which clears the $U_x$ term from the second line at the cost of introducing a $V_y$ term carrying the right coefficient; then a multiplication of the first equation by $\alpha$, which renders the principal part of the system precisely the symbol of the variable Cauchy--Riemann operator on $\Az = \R[X]/(X^2 + \beta X + \alpha)$. The choice $\alpha = a_{22}/a_{11}$ and $\beta = -(a_{12}+a_{21})/a_{11}$ is then forced --- not chosen --- by the requirement that the resulting principal part match the variable Cauchy--Riemann symbol; ellipticity becomes the discriminant condition $4\alpha - \beta^2 > 0$. The displayed lower-order coefficients in \eqref{eq:canonical-coeffs} extend the 2011 derivation, which treated the homogeneous case ($a_{i3} = a_{i4} = 0$, $f_k = 0$); the bookkeeping for the inhomogeneous and lower-order data follows the same row-operation pattern.

The 2011 note treats implicitly the case where the pair $\alpha$ and $\beta$ is effectively rigid, ($G \equiv 0$) --- equivalently, the case where the moving generator of the structure polynomial is holomorphic --- and interprets the resulting equation as a parameter-depending Vekua equation in the sense of Tutschke--Vanegas \cite{tutschke-vanegas}. In the rigid setting no obstruction arises and Steps~2 and~4 of the present pipeline are vacuous; the derivation passes directly from \eqref{eq:canonical} to a Vekua equation in $\Az$ via the bundling of Step~5. Thus the pipeline embeds that rigid case as a special instance and supplies the moving-generator structure (Steps~2 and~4) that makes the variable-coefficient case --- where $\alpha, \beta$ depend on position and the generator $\ii$ of $\Az$ correspondingly moves --- classically tractable. Every dependence on derivatives of the $a_{ij}$ produced by the row operations of \eqref{eq:UV-def}--\eqref{eq:canonical} is concentrated in the scalar coefficients $a$ and $c$; the additional first-derivative dependence introduced at Step~2 by the obstruction $G$ is what is new in the variable-coefficient case.
\subsection{Step 4: Modified coefficients}\label{ssec:modified}

We set
\begin{equation}\label{eq:modified}
\tilde b := b - G_1, \qquad \tilde d := d - G_2.
\end{equation}
The motivation will be visible at Step~5: when bundling $W = U + \ii V$, the product rule for $\dbar (\ii V)$ produces a term $\frac{1}{2} G\, V$, and the modification absorbs it. In the rigid case ($G \equiv 0$), $\tilde b = b$ and $\tilde d = d$.

\subsection{Step 5: The fiber-algebra Vekua equation}\label{ssec:fiber-vekua}

We bundle $W := U + \ii V \in \Az$ and apply the variable Cauchy--Riemann operator
\begin{equation}\label{eq:dbar-Az}
\dbar := \tfrac{1}{2}(\partial_x + \ii\, \partial_y).
\end{equation}
Using $\ii^2 = -\beta\ii - \alpha$ and $G = \ii_x + \ii\ii_y$, the product rule yields
\begin{equation}\label{eq:dbar-W}
\dbar W = \tfrac{1}{2}\bigl[(U_x - \alpha V_y) + \ii(U_y + V_x - \beta V_y) + G\, V\bigr].
\end{equation}
Substituting \eqref{eq:canonical} for the bracketed terms, decomposing $G\,V$ into its real and $\ii$-components, and using the modified coefficients \eqref{eq:modified} converts \eqref{eq:dbar-W} into the matching identity
\begin{equation}\label{eq:dbar-W-final}
\dbar W = -\tfrac{1}{2}\bigl[(aU + \tilde b\, V) + \ii(cU + \tilde d\, V)\bigr] + \tfrac{1}{2}(f + \ii g).
\end{equation}

\medskip
We seek to recast \eqref{eq:dbar-W-final} as a Vekua equation
\begin{equation}\label{eq:fiber-vekua-target}
\dbar W + A\, W + B\, \widehat W = F
\end{equation}
in $\Az$, where
\begin{equation}\label{eq:hatW-def}
\widehat W := U + \hat\ii\, V, \qquad \hat\ii := -\beta - \ii
\end{equation}
is the elliptic conjugate (the other root of the structure polynomial; in the standard case $\alpha = 1, \beta = 0$ this recovers $\bar W$). Write $A = A_0 + A_1\ii$ and $B = B_0 + B_1\ii$ with $A_j, B_j$ real-valued. Using $\ii^2 = -\beta\ii - \alpha$:
\begin{align*}
AW &= (A_0 + A_1\ii)(U + \ii V) = (A_0 U - \alpha A_1 V) + (A_0 V + A_1 U - \beta A_1 V)\,\ii, \\
B\widehat W &= (B_0 + B_1\ii)(U - (\beta + \ii)V) = (B_0 U - \beta B_0 V + \alpha B_1 V) + (B_1 U - B_0 V)\,\ii,
\end{align*}
where the $B\widehat W$ computation uses $-B_1 V \ii^2 = \alpha B_1 V + \beta B_1 V \ii$ to cancel a $-\beta B_1 V \ii$ term. Matching $AW + B\widehat W$ against the right-hand side of \eqref{eq:dbar-W-final} (with sign flipped, since \eqref{eq:fiber-vekua-target} has $+AW+B\widehat W$) yields four real equations:
\begin{equation}\label{eq:4x4-matching}
\begin{aligned}
\text{[$U$-real]:} &\quad A_0 + B_0 = a/2, \\
\text{[$V$-real]:} &\quad -\alpha A_1 - \beta B_0 + \alpha B_1 = \tilde b/2, \\
\text{[$U$-$\ii$]:} &\quad A_1 + B_1 = c/2, \\
\text{[$V$-$\ii$]:} &\quad A_0 - \beta A_1 - B_0 = \tilde d/2.
\end{aligned}
\end{equation}
The first and fourth equations determine $A_0$ and $B_0$ in terms of $A_1$:
\[
A_0 = \frac{a + \tilde d}{4} + \frac{\beta}{2}A_1, \qquad B_0 = \frac{a - \tilde d}{4} - \frac{\beta}{2}A_1.
\]
The third gives $B_1 = c/2 - A_1$. Substituting into the second and using $\Delta = 4\alpha - \beta^2$:
\[
-2\Delta A_1 = 2\tilde b + \beta(a - \tilde d) - 2\alpha c.
\]
With the asymmetry parameters
\begin{equation}\label{eq:sigma-tau}
\sigma := a - \tilde d, \qquad \tau := \alpha c - \tilde b,
\end{equation}
this reads $-2\Delta A_1 = -2\tau + \beta\sigma$, hence $A_1 = (2\tau - \beta\sigma)/(2\Delta)$. Back-substituting yields
\begin{equation}\label{eq:ABcoeffs}
\begin{aligned}
A_0 &= \frac{a + \tilde d}{4} + \frac{\beta(2\tau - \beta\sigma)}{4\Delta}, & A_1 &= \frac{2\tau - \beta\sigma}{2\Delta}, \\[4pt]
B_0 &= \frac{\sigma}{4} - \frac{\beta(2\tau - \beta\sigma)}{4\Delta}, & B_1 &= \frac{c\Delta - 2\tau + \beta\sigma}{2\Delta},
\end{aligned}
\end{equation}
and $F = (f + \ii g)/2$.

\subsection{Step 6: The spectral Vekua equation}\label{ssec:spectral-vekua}

The map $\ii \mapsto \lambda$ is an algebra homomorphism $\Az \to \C$ at each point, since $\lambda$ satisfies $\lambda^2 + \beta\lambda + \alpha = 0$ in $\C$. Under this map, $\hat\ii = -\beta - \ii \mapsto -\beta - \lambda = \bar\lambda$, the elliptic conjugate becomes the ordinary complex conjugate, and \eqref{eq:fiber-vekua-target} becomes
\begin{equation}\label{eq:spectral-vekua}
\dbar_\lambda\, w + A_\lambda\, w + B_\lambda\, \bar w = F_\lambda,
\end{equation}
where $w = U + \lambda V$, $\dbar_\lambda := \tfrac{1}{2}(\partial_x + \lambda\partial_y)$, and the coefficients in $\C$ are
\begin{align}
A_\lambda &= \frac{(2\alpha - \beta^2)\sigma + \Delta\tilde d + \beta\tau}{2\Delta} + \frac{2\tau - \beta\sigma}{2\Delta}\, \lambda, \label{eq:Alambda} \\
B_\lambda &= \frac{2\alpha\sigma - \beta\tau}{2\Delta} + \frac{\Delta c - 2\tau + \beta\sigma}{2\Delta}\, \lambda, \label{eq:Blambda} \\
F_\lambda &= \tfrac{1}{2}(f + \lambda g) = \tfrac{1}{2}\bigl[\alpha f_1 + \lambda(f_2 + \beta f_1)\bigr]. \label{eq:Flambda}
\end{align}
A direct addition shows $A_\lambda + B_\lambda = (a + c\lambda)/2$.

\subsection{Step 7: The Beltrami--Vekua form}\label{ssec:beltrami-vekua}

In conformal coordinates $\partial_x = \partial_z + \partial_{\bar z}$, $\partial_y = i(\partial_z - \partial_{\bar z})$, so
\[
w_x + \lambda w_y = (1 + i\lambda)\, w_z + (1 - i\lambda)\, w_{\bar z}.
\]
Factoring $(1 - i\lambda)$ and using the first Cayley identity in \eqref{eq:cayley}:
\begin{equation}\label{eq:intertwining-mid}
w_x + \lambda w_y = (1 - i\lambda)\bigl[w_{\bar z} - \mu\, w_z\bigr].
\end{equation}
Therefore $\dbar_\lambda = \tfrac{1}{2}(1 - i\lambda)(\partial_{\bar z} - \mu\partial_z)$, and the second Cayley identity gives $\tfrac{1}{2}(1 - i\lambda) = 1/(1-\mu)$. Substituting into \eqref{eq:spectral-vekua} and multiplying through by $(1-\mu)$:
\begin{equation}\label{eq:BV-final}
\boxed{\;w_{\bar z} - \mu\, w_z + \Acal\, w + \Bcal\, \bar w = \Fcal,\;}
\end{equation}
with
\begin{equation}\label{eq:BV-coeffs}
\Acal = (1-\mu)\, A_\lambda, \qquad \Bcal = (1-\mu)\, B_\lambda, \qquad \Fcal = (1-\mu)\, F_\lambda.
\end{equation}

\medskip
This completes the derivation. The data $(\mu, \Acal, \Bcal, \Fcal)$ are pointwise-defined continuous functions on $\Omega$, with regularity inherited from the hypotheses of Definition~\ref{def:smooth-class}: $\mu$ is $C^1$ (a function of the principal-part data alone), and $\Acal, \Bcal, \Fcal$ are $C^0$ (built from $C^0$ lower-order data, $C^0$ forcing, and the at-most-once-differentiated principal part). They are computed from the $a_{ij}, f_k$ and the first partial derivatives of the principal-part coefficients through the explicit formulas \eqref{eq:alpha-beta}--\eqref{eq:BV-coeffs}.

\medskip
\noindent
\textbf{Special cases.} When the principal part is constant (so $G = 0$), the formulas simplify substantially: $r := (a_{22})_x/a_{22} = 0$ and the modified coefficients equal the unmodified. When in addition $a_{i3} = a_{i4} = 0$, all of $a, b, c, d$ vanish, so $\Acal = \Bcal = \Fcal = 0$ and \eqref{eq:BV-final} reduces to the pure Beltrami equation $w_{\bar z} - \mu w_z = 0$. When $\alpha = 1, \beta = 0$, we have $\mu = 0$ and \eqref{eq:BV-final} reduces to the classical Vekua equation $w_{\bar z} + \Acal w + \Bcal \bar w = \Fcal$. 

\section{Gauge equivalence}\label{sec:gauge}

We work henceforth on the data side, taking \eqref{eq:BV-final} as our starting point. Let $\BV(\Omega)$ denote the space of BV data $D = (\mu, \Acal, \Bcal, \Fcal)$ with $\mu, \Acal, \Bcal, \Fcal$ continuous on $\Omega$ and $|\mu(z)| < 1$ pointwise.

\begin{definition}[Gauge]\label{def:gauge}
A \emph{gauge} on $\Omega$ is a function $\phi \in C^1(\Omega; \C^*)$, i.e., $C^1$, complex-valued, and nowhere zero. The set of all gauges is denoted $\Gcal(\Omega)$; it is a group under pointwise multiplication.
\end{definition}

\begin{proposition}[Gauge action]\label{prop:gauge-action}
Under $w \mapsto \tilde w := \phi w$ for $\phi \in \Gcal(\Omega)$, the equation \eqref{eq:BV-final} transforms to a new BV equation $\phi \cdot D = (\mu', \Acal', \Bcal', \Fcal')$ with
\begin{equation}\label{eq:gauge-action}
\mu' = \mu, \qquad \Acal' = \Acal - \frac{\phi_{\bar z}}{\phi} + \mu\,\frac{\phi_z}{\phi}, \qquad \Bcal' = \Bcal\,\frac{\phi}{\bar\phi}, \qquad \Fcal' = \phi\, \Fcal.
\end{equation}
The map $D \mapsto \phi \cdot D$ defines a left action of $\Gcal(\Omega)$ on $\BV(\Omega)$, and the orbit equivalence
\[
D_1 \sim_g D_2 \;\iff\; \exists \phi \in \Gcal(\Omega):\, \phi \cdot D_1 = D_2
\]
is an equivalence relation on $\BV(\Omega)$.
\end{proposition}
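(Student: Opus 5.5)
The plan is to substitute directly into \eqref{eq:BV-final} and read off the new coefficients, then verify the group-action axioms using the logarithmic-derivative structure of the $\Acal$-law. The orbit relation is then an equivalence relation for the standard reason.

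\textbf{Transformation law.} Let $w$ solve \eqref{eq:BV-final} and set $\tilde w = \phi w$, so $w = \tilde w/\phi$; this is legitimate because $\phi$ is $C^1$ and nowhere zero. By the quotient rule,
\[
w_{\bar z} = \frac{\tilde w_{\bar z}}{\phi} - \frac{\phi_{\bar z}}{\phi^2}\,\tilde w,
\qquad
w_z = \frac{\tilde w_z}{\phi} - \frac{\phi_z}{\phi^2}\,\tilde w,
\qquad
\bar w = \frac{\bar{\tilde w}}{\bar\phi}.
\]
Substituting these into \eqref{eq:BV-final} and multiplying through by $\phi$ gives
\[
\tilde w_{\bar z} - \mu\,\tilde w_z + \Bigl(\Acal - \frac{\phi_{\bar z}}{\phi} + \mu\frac{\phi_z}{\phi}\Bigr)\tilde w + \Bcal\,\frac{\phi}{\bar\phi}\,\bar{\tilde w} = \phi\,\Fcal .
\]
This is exactly \eqref{eq:gauge-action}. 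The principal part is untouched because multiplying by $\phi$ exactly undoes the $1/\phi$ prefactor, so $\mu' = \mu$. The new data are continuous, since $\phi\in C^1$ makes $\phi_z/\phi$ and $\phi_{\bar z}/\phi$ continuous, and $|\mu'| = |\mu| < 1$. Hence $\phi\cdot D \in \BV(\Omega)$.

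\textbf{Action axioms.} For the identity, $\phi \equiv 1$ gives $\phi\cdot D = D$. For compatibility, take $\phi,\psi \in \Gcal(\Omega)$. The $\Bcal$-law and $\Fcal$-law are multiplicative:
\[
\frac{\psi}{\bar\psi}\cdot\frac{\phi}{\bar\phi} = \frac{\psi\phi}{\overline{\psi\phi}},
\qquad
\psi(\phi\Fcal) = (\psi\phi)\Fcal .
\]
The $\Acal$-law involves the logarithmic derivatives, which are additive on products:
\[
\frac{(\psi\phi)_{\bar z}}{\psi\phi} = \frac{\psi_{\bar z}}{\psi} + \frac{\phi_{\bar z}}{\phi},
\qquad
\frac{(\psi\phi)_{z}}{\psi\phi} = \frac{\psi_{z}}{\psi} + \frac{\phi_{z}}{\phi}.
\]
Since $\mu$ is unchanged by the first gauge, the second gauge uses the same $\mu$, and the two corrections add. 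This yields $\psi\cdot(\phi\cdot D) = (\psi\phi)\cdot D$. Because $\Gcal(\Omega)$ is abelian, this is a left action (and also a right action).

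\textbf{Equivalence relation.} This follows formally from the group action. Reflexivity uses $\phi \equiv 1$. Symmetry uses $\phi^{-1} = 1/\phi \in \Gcal(\Omega)$, which is $C^1$ and nowhere zero, together with compatibility. Transitivity uses closure of $\Gcal(\Omega)$ under products.

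The only step needing care is the bookkeeping in the $\Acal$-law: the sign of the $\mu\,\phi_z/\phi$ term, and the fact that this term composes correctly only because $\mu' = \mu$. Everything else is routine.
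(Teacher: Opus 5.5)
Your proposal is correct and follows the paper's proof. As there, you substitute $w=\tilde w/\phi$, multiply through by $\phi$, and read off the new coefficients, with the action axioms coming from additivity of logarithmic derivatives and multiplicativity of $\phi/\bar\phi$; your explicit checks of continuity of the new data and of the equivalence-relation axioms fill in details the paper leaves implicit.
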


\begin{proof}
From $w = \tilde w/\phi$ we get $w_{\bar z} = \tilde w_{\bar z}/\phi - \tilde w \phi_{\bar z}/\phi^2$, $w_z = \tilde w_z/\phi - \tilde w \phi_z/\phi^2$, $\bar w = \overline{\tilde w}/\bar\phi$. Substituting into \eqref{eq:BV-final} and multiplying by $\phi$ gives \eqref{eq:gauge-action}. The action axioms follow from the additivity of logarithmic derivatives on products and the multiplicativity of $\phi/\bar\phi$.
\end{proof}

The most consequential feature of \eqref{eq:gauge-action} is the asymmetric way $\Acal$, $\Bcal$, $\Fcal$ transform.

\begin{itemize}
\item $\Acal$ shifts \emph{additively} by logarithmic derivatives of $\phi$.
\item $\Bcal$ is multiplied by $\phi/\bar\phi$, which is unimodular: writing $\phi = re^{i\theta}$, $\phi/\bar\phi = e^{2i\theta}$, hence
\begin{equation}\label{eq:Bmod-gauge-inv}
|\Bcal'| = |\Bcal|.
\end{equation}
\item $\Fcal$ is multiplied by $\phi$, which has arbitrary modulus $r > 0$, so $|\Fcal|$ is \emph{not} gauge-invariant.
\end{itemize}

The gauge-invariance of $|\Bcal|$ as a function on $\Omega$ is the algebraic fulcrum on which everything in Section~\ref{sec:invariant} turns. The contrast with $|\Fcal|$ will reappear when we ask whether $\Fcal$ supports an analogous mass-type density.

\section{Diffeomorphism equivalence}\label{sec:diffeo}

The second symmetry is change of coordinates on the domain. Unlike the gauge action, which fixed $\Omega$ and altered only the unknown, the diffeomorphism action moves between potentially distinct domains.

\begin{definition}[Diffeomorphism, Jacobian]\label{def:diffeo}
A \emph{(orientation-preserving) diffeomorphism} between domains $\Omega_1, \Omega_2 \subset \C$ is a $C^1$ bijection $\Phi: \Omega_1 \to \Omega_2$ with $C^1$ inverse and Jacobian
\begin{equation}\label{eq:jacobian}
J := |\Phi_z|^2 - |\Phi_{\bar z}|^2 > 0
\end{equation}
everywhere on $\Omega_1$.
\end{definition}

\begin{proposition}[Diffeomorphism pullback]\label{prop:diffeo-action}
Let $\Phi: \Omega_1 \to \Omega_2$ be a diffeomorphism and $D = (\mu, \Acal, \Bcal, \Fcal) \in \BV(\Omega_2)$. Set
\begin{equation}\label{eq:K-def}
K := \Phi_z + (\mu \circ \Phi)\,\overline{\Phi_{\bar z}}.
\end{equation}
Then $K$ is nowhere zero on $\Omega_1$, and $w \circ \Phi$ satisfies the BV equation $\Phi^* D = (\mu^*, \Acal^*, \Bcal^*, \Fcal^*)$ on $\Omega_1$ with
\begin{equation}\label{eq:diffeo-pullback}
\mu^* = \frac{\Phi_{\bar z} + (\mu\circ\Phi)\,\overline{\Phi_z}}{K}, \qquad
\Acal^* = \frac{J\,(\Acal \circ \Phi)}{K}, \qquad
\Bcal^* = \frac{J\,(\Bcal\circ\Phi)}{K}, \qquad
\Fcal^* = \frac{J\,(\Fcal\circ\Phi)}{K}.
\end{equation}
The pulled-back $\mu^*$ satisfies $|\mu^*| < 1$, so $\Phi^* D \in \BV(\Omega_1)$.
\end{proposition}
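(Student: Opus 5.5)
The plan is a direct chain-rule computation on $W := w\circ\Phi$, followed by inverting a $2\times 2$ linear system for the $\zeta$-derivatives of $w$, where $\zeta = \Phi(z)$. First I record the Wirtinger chain rule, using $(\bar\Phi)_z = \overline{\Phi_{\bar z}}$ and $(\bar\Phi)_{\bar z} = \overline{\Phi_z}$:
\[
W_z = (w_\zeta\circ\Phi)\,\Phi_z + (w_{\bar\zeta}\circ\Phi)\,\overline{\Phi_{\bar z}}, \qquad
W_{\bar z} = (w_\zeta\circ\Phi)\,\Phi_{\bar z} + (w_{\bar\zeta}\circ\Phi)\,\overline{\Phi_z}.
\]
The determinant of this system is $\Phi_z\overline{\Phi_z} - \Phi_{\bar z}\overline{\Phi_{\bar z}} = J > 0$, so I can invert it:
\[
w_{\bar\zeta}\circ\Phi = \frac{\Phi_z W_{\bar z} - \Phi_{\bar z} W_z}{J}, \qquad
w_\zeta\circ\Phi = \frac{\overline{\Phi_z}\, W_z - \overline{\Phi_{\bar z}}\, W_{\bar z}}{J}.
\]

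Next I form the principal part. Composing the BV equation \eqref{eq:BV-final} on $\Omega_2$ with $\Phi$ and collecting terms gives
\[
\bigl(w_{\bar\zeta} - \mu w_\zeta\bigr)\circ\Phi = \frac{1}{J}\Bigl[\bigl(\Phi_z + (\mu\circ\Phi)\overline{\Phi_{\bar z}}\bigr) W_{\bar z} - \bigl(\Phi_{\bar z} + (\mu\circ\Phi)\overline{\Phi_z}\bigr) W_z\Bigr] = \frac{K}{J}\bigl(W_{\bar z} - \mu^* W_z\bigr).
\]
The lower-order terms pass through unchanged, since $(\Acal w + \Bcal\bar w - \Fcal)\circ\Phi = (\Acal\circ\Phi) W + (\Bcal\circ\Phi)\overline W - \Fcal\circ\Phi$. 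Multiplying the composed equation by $J/K$ then yields exactly \eqref{eq:diffeo-pullback}. The data $\mu^*, \Acal^*, \Bcal^*, \Fcal^*$ are continuous, since $\Phi$ is $C^1$ and $K \neq 0$.

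Nonvanishing of $K$ comes from $J>0$, which gives $|\Phi_z| > |\Phi_{\bar z}|$. Combined with $|\mu|<1$, this gives
\[
|K| \ge |\Phi_z| - |\mu\circ\Phi|\,|\Phi_{\bar z}| > 0.
\]
For $|\mu^*|<1$ I would prove the closure identity
\[
|K|^2 - \bigl|\Phi_{\bar z} + (\mu\circ\Phi)\overline{\Phi_z}\bigr|^2 = \bigl(1 - |\mu\circ\Phi|^2\bigr)\,J.
\]
To do so, expand both squared moduli. The cross terms $2\,\mathrm{Re}\bigl(\Phi_z\,\bar\mu\,\Phi_{\bar z}\bigr)$ appear in both and cancel, leaving
\[
|\Phi_z|^2 + |\mu|^2|\Phi_{\bar z}|^2 - |\Phi_{\bar z}|^2 - |\mu|^2|\Phi_z|^2 = (1-|\mu|^2)J.
\]
Dividing by $|K|^2$ gives $1 - |\mu^*|^2 = (1-|\mu\circ\Phi|^2)\,J/|K|^2 > 0$, which is the identity advertised in the introduction.

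The only real obstacle is bookkeeping: keeping the conjugations straight in the chain rule and in the cross terms of the closure identity. The cleanest safeguard is to check the result against two special cases. For holomorphic $\Phi$ the formulas should reduce to $\mu^* = \mu\circ\Phi\,\overline{\Phi'}/\Phi'$ and $\Bcal^* = \overline{\Phi'}\,(\Bcal\circ\Phi)$. For the identity map they should give back the original data.
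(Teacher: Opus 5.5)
Your proposal is correct and follows the paper's proof essentially step by step: the same chain rule, inversion via the determinant $J$, multiplication by $J/K$, and closure identity with cancelling cross terms. The only minor deviation is that you obtain $K\neq 0$ directly from the triangle inequality $|K|\ge |\Phi_z|-|\mu\circ\Phi|\,|\Phi_{\bar z}|>0$. The paper instead reads it off the closure identity, $|K|^2\ge (1-|\mu\circ\Phi|^2)J>0$; both arguments are valid.
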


\begin{proof}
Write $\tilde w := w \circ \Phi$. The chain rule
\[
\begin{pmatrix} \tilde w_z \\ \tilde w_{\bar z} \end{pmatrix} = \begin{pmatrix} \Phi_z & \overline{\Phi_{\bar z}} \\ \Phi_{\bar z} & \overline{\Phi_z} \end{pmatrix} \begin{pmatrix} w_\zeta \circ \Phi \\ w_{\bar\zeta} \circ \Phi \end{pmatrix}
\]
inverts (with determinant $J > 0$) to
\[
w_\zeta \circ \Phi = \frac{\overline{\Phi_z}\,\tilde w_z - \overline{\Phi_{\bar z}}\,\tilde w_{\bar z}}{J}, \qquad w_{\bar\zeta} \circ \Phi = \frac{-\Phi_{\bar z}\,\tilde w_z + \Phi_z\,\tilde w_{\bar z}}{J}.
\]
Substituting into $w_{\bar\zeta} - \mu w_\zeta + \Acal w + \Bcal\bar w = \Fcal$ composed with $\Phi$, multiplying by $J$, and dividing by $K$ (the resulting coefficient of $\tilde w_{\bar z}$) gives \eqref{eq:diffeo-pullback}.

The closure identity
\begin{equation}\label{eq:closure}
|K|^2 - |\Phi_{\bar z} + (\mu\circ\Phi)\,\overline{\Phi_z}|^2 = (1 - |\mu\circ\Phi|^2)\, J
\end{equation}
follows by expanding both squared moduli; the cross terms $2\Re((\overline{\mu\circ\Phi})\Phi_z\Phi_{\bar z})$ are identical and cancel, leaving $(|\Phi_z|^2 - |\Phi_{\bar z}|^2)(1 - |\mu\circ\Phi|^2) = (1-|\mu\circ\Phi|^2)J$. Both factors on the right being strictly positive, $|K|^2 \geq (1-|\mu\circ\Phi|^2)J > 0$, so $K$ is nowhere zero. Dividing both sides of \eqref{eq:closure} by $|K|^2$:
\begin{equation}\label{eq:conformal-identity}
1 - |\mu^*|^2 = (1 - |\mu\circ\Phi|^2) \,\frac{J}{|K|^2} > 0,
\end{equation}
hence $|\mu^*| < 1$.
\end{proof}

\begin{remark}
The identity \eqref{eq:conformal-identity} --- the \emph{conformal identity} --- is central to the proof of Theorem~\ref{thm:main-invariance} below: the multiplicative weight $J/|K|^2$ on the right is exactly what cancels in the diffeomorphism transformation of $|\Bcal|^2$, producing the invariant 2-form. The transformation law for $\mu$ in \eqref{eq:diffeo-pullback} is the classical Beltrami composition formula \cite{ahlfors}; specializing to $\mu \equiv 0$ recovers $\mu^* = \Phi_{\bar z}/\Phi_z$, the dilatation of $\Phi$. In particular $|\mu|$ is \emph{not} preserved by diffeomorphism pullback --- the entire content of Beltrami uniformization is that $|\mu|$ can be reduced to zero by an appropriate choice of $\Phi$.
\end{remark}

\begin{theorem}\label{thm:diffeo-equiv}
The relation
\[
D_1 \sim_d D_2 \;\iff\; \exists \text{ diffeomorphism } \Phi: \Omega_1 \to \Omega_2 \text{ such that } \Phi^* D_2 = D_1
\]
is an equivalence relation on the disjoint union of all $\BV(\Omega)$ over $\Omega \subset \C$.
\end{theorem}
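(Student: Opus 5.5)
Since $\sim_d$ is defined by the existence of a diffeomorphism $\Phi$ with $\Phi^* D_2 = D_1$, the plan is to reduce the three axioms to a single structural fact: pullback is functorial, that is,
\[
\mathrm{id}^* D = D, \qquad (\Psi\circ\Phi)^* D = \Phi^*(\Psi^* D)
\]
for diffeomorphisms $\Phi:\Omega_1\to\Omega_2$, $\Psi:\Omega_2\to\Omega_3$ and $D\in\BV(\Omega_3)$. Granting this, reflexivity comes from $\Phi=\mathrm{id}$. Transitivity follows because $\Phi^*D_2=D_1$ and $\Psi^*D_3=D_2$ give $(\Psi\circ\Phi)^*D_3=D_1$, and $\Psi\circ\Phi$ is again a $C^1$ bijection with $C^1$ inverse and positive Jacobian (Jacobians multiply). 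Symmetry comes from applying $(\Phi^{-1})^*$ to $\Phi^*D_2=D_1$ and using $\Phi\circ\Phi^{-1}=\mathrm{id}$, which yields $(\Phi^{-1})^*D_1=D_2$. Here $\Phi^{-1}$ is orientation-preserving since its Jacobian is $1/(J\circ\Phi^{-1})>0$.

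For the identity axiom, I would substitute $\Phi_z=1$, $\Phi_{\bar z}=0$, $J=1$ into \eqref{eq:K-def} and \eqref{eq:diffeo-pullback}. This gives $K=1$ and returns every component of $D$ unchanged.

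Composition is the main point, and I would not verify it by expanding \eqref{eq:diffeo-pullback} algebraically. Instead I would use the derivation in Proposition~\ref{prop:diffeo-action}: $\Phi^*D$ is the \emph{unique} data for which $w\circ\Phi$ satisfies the pulled-back equation. More precisely, $\Phi^*D$ is obtained by rewriting the composed equation $(w_{\bar\zeta}-\mu w_\zeta+\Acal w+\Bcal\bar w-\Fcal)\circ\Phi=0$ in the derivatives of $\tilde w=w\circ\Phi$ and normalizing the $\tilde w_{\bar z}$-coefficient to $1$. At each point this is the operator identity
\[
\bigl(L_D w\bigr)\circ\Phi=\frac{K}{J}\,L_{\Phi^*D}(w\circ\Phi),
\]
where $L_D$ denotes the BV operator for $D$, and it holds for all $C^1$ functions $w$. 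Applying the identity twice, first for $\Psi$ and then for $\Phi$, gives
\[
(L_Dw)\circ\Psi\circ\Phi=\frac{K_\Phi}{J_\Phi}\cdot\frac{K_\Psi\circ\Phi}{J_\Psi\circ\Phi}\,L_{\Phi^*\Psi^*D}(w\circ\Psi\circ\Phi),
\]
while a single application for $\Psi\circ\Phi$ gives $\frac{K_{\Psi\circ\Phi}}{J_{\Psi\circ\Phi}}L_{(\Psi\circ\Phi)^*D}(w\circ\Psi\circ\Phi)$.

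To conclude, I would argue that the data are determined by the operator, and I expect this uniqueness step to be the only delicate part of the proof. The map $w\mapsto w\circ\Psi\circ\Phi$ is a bijection on $C^1$ functions, so both expressions describe the same first-order operator on $\tilde w$ up to a nonvanishing scalar factor. I would test it on locally affine functions $\tilde w=c_0+c_1 z+c_2\bar z$ (and on constants), whose value and derivatives at a point can be prescribed independently. This shows that two normalized BV operators which agree up to a scalar factor must have the same factor and the same coefficients $\mu,\Acal,\Bcal,\Fcal$. Evaluating the $\tilde w_{\bar z}$-coefficient forces the two scalar factors to coincide, since both operators have $\tilde w_{\bar z}$-coefficient $1$ and the factors are nonzero ($K\neq0$ by Proposition~\ref{prop:diffeo-action}). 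Comparing the coefficients of $\tilde w_z$, $\tilde w$, $\overline{\tilde w}$ and the constant term then gives $(\Psi\circ\Phi)^*D=\Phi^*(\Psi^*D)$. Finally, $|\mu^*|<1$ at each stage, by Proposition~\ref{prop:diffeo-action}, so every pulled-back datum lies in the appropriate $\BV(\Omega)$ and the relation is well-defined on the disjoint union.
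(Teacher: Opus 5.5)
Your proof is correct and takes the same route as the paper. The paper checks reflexivity via the identity, symmetry via the inverse, and transitivity via composition, and attributes the consistency of \eqref{eq:diffeo-pullback} under composition to the chain rule; your intertwining identity $(L_D w)\circ\Phi = \frac{K}{J}\,L_{\Phi^*D}(w\circ\Phi)$, combined with coefficient uniqueness on affine test functions, is a sound and careful way of making that chain-rule claim precise.
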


The proof verifies reflexivity (identity diffeomorphism), symmetry (inverse), and transitivity (composition); the algebraic content is the consistency of \eqref{eq:diffeo-pullback} under composition, which follows from the chain rule.

\medskip
\noindent
\textbf{The combined equivalence.} The relations $\sim_g$ and $\sim_d$ generate a coarser equivalence relation
\begin{equation}\label{eq:combined-equiv}
D_1 \sim D_2 \;\iff\; \text{a finite chain of $\sim_g$ and $\sim_d$ steps connects } D_1 \text{ to } D_2.
\end{equation}
Equivalently, $\sim$ is the orbit equivalence of the joint action of the gauge group and the diffeomorphism groupoid. The remainder of the paper concerns this combined relation.

\section{The classical reduction inside Beltrami--Vekua}\label{sec:reduction}

Before introducing the main invariant, we record a structural application of the equivalence theory: the classical two-stage reduction of Vekua \cite{vekua} --- (i) solve the Beltrami equation to uniformize, (ii) gauge away the resulting $w$-coupling --- becomes a single composite operation in the BV framework, with the gauge step reducing after one Beltrami solve to a flat $\bar\partial$-equation handled by the Cauchy transform.

\subsection{Reduction to Vekua via uniformization}

\begin{proposition}[Reduction to Vekua]\label{prop:reduction-to-vekua}
Let $D = (\mu, \Acal, \Bcal, \Fcal) \in \BV(\Omega)$ with $\|\mu\|_\infty < 1$, and suppose there exists a $C^1$ diffeomorphism $\Phi: \Omega \to \Omega'$ satisfying the Beltrami equation
\begin{equation}\label{eq:Beltrami-uniformizing}
\Phi_{\bar z} = \mu\, \Phi_z.
\end{equation}
Set $\Psi := \Phi^{-1}: \Omega' \to \Omega$. Then $\Psi^* D$ is a Vekua equation on $\Omega'$:
\[
w_{\bar\zeta} + \Acal'\, w + \Bcal'\, \bar w = \Fcal',
\]
with $\mu' \equiv 0$ and
\begin{equation}\label{eq:Vekua-coeffs}
\Acal' = \overline{\Psi_\zeta}\,(\Acal \circ \Psi), \qquad \Bcal' = \overline{\Psi_\zeta}\,(\Bcal \circ \Psi), \qquad \Fcal' = \overline{\Psi_\zeta}\,(\Fcal \circ \Psi).
\end{equation}
In particular, $D \sim_d \Psi^* D$ is gauge-diffeomorphism equivalent to a classical Vekua equation.
\end{proposition}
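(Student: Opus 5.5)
The plan is to apply Proposition~\ref{prop:diffeo-action} with the diffeomorphism $\Psi:\Omega'\to\Omega$ in place of $\Phi$, pulling back $D\in\BV(\Omega)$ to $\Psi^*D\in\BV(\Omega')$. The whole computation then reduces to evaluating $K$, $\mu^*$ and $J/K$ for $\Psi$. These quantities are expressed through the Wirtinger derivatives of $\Psi$, which we relate to those of $\Phi$ via the inverse-function chain rule.

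First, I would differentiate the identity $\Phi\circ\Psi=\mathrm{id}$ using the chain-rule matrix from the proof of Proposition~\ref{prop:diffeo-action}. This gives, at $z=\Psi(\zeta)$,
\[
\Psi_\zeta = \frac{\overline{\Phi_z}}{J_\Phi}, \qquad \Psi_{\bar\zeta} = -\frac{\Phi_{\bar z}}{J_\Phi}.
\]
Substituting $\Phi_{\bar z}=\mu\Phi_z$ yields $\Psi_{\bar\zeta} = -\mu\,\Psi_\zeta$ after conjugating appropriately. More precisely, $\Psi_{\bar\zeta} = -(\mu\circ\Psi)\,\overline{\Psi_\zeta}$. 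Indeed, $\overline{\Psi_\zeta}=\Phi_z/J_\Phi$, so $-\mu\,\overline{\Psi_\zeta} = -\mu\Phi_z/J_\Phi = -\Phi_{\bar z}/J_\Phi = \Psi_{\bar\zeta}$.

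With this relation, the numerator of $\mu^*$ in \eqref{eq:diffeo-pullback} is
\[
\Psi_{\bar\zeta}+(\mu\circ\Psi)\overline{\Psi_\zeta},
\]
which vanishes identically. Hence $\mu'\equiv 0$.

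Next I compute $K = \Psi_\zeta + (\mu\circ\Psi)\overline{\Psi_{\bar\zeta}}$. Using $\overline{\Psi_{\bar\zeta}} = -\bar\mu\,\Psi_\zeta$, this becomes $K=\Psi_\zeta(1-|\mu\circ\Psi|^2)$. For the Jacobian,
\[
J_\Psi=|\Psi_\zeta|^2-|\Psi_{\bar\zeta}|^2=|\Psi_\zeta|^2(1-|\mu\circ\Psi|^2).
\]
Therefore $J_\Psi/K = |\Psi_\zeta|^2/\Psi_\zeta = \overline{\Psi_\zeta}$. Inserting this into the formulas for $\Acal^*,\Bcal^*,\Fcal^*$ in \eqref{eq:diffeo-pullback} gives exactly \eqref{eq:Vekua-coeffs}. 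The closure identity \eqref{eq:conformal-identity} provides a consistency check: the left side $1-|\mu^*|^2$ equals $1$, and indeed $(1-|\mu|^2)J_\Psi/|K|^2=1$ by the formulas above.

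The final claim follows directly from Theorem~\ref{thm:diffeo-equiv}. Since $\Psi^*D$ is the pullback of $D$ by a diffeomorphism, $\Psi^*D\sim_d D$, and in particular $\Psi^*D\sim D$ under the combined relation \eqref{eq:combined-equiv}. I expect the main obstacle to be the sign and conjugation bookkeeping in the inverse-derivative formulas, specifically getting $\Psi_{\bar\zeta}=-(\mu\circ\Psi)\overline{\Psi_\zeta}$ with the correct placement of the bar. To guard against errors, I would verify this step against the explicit matrix inverse already displayed in the proof of Proposition~\ref{prop:diffeo-action}. Regularity poses no difficulty, since $\Psi$ is $C^1$ by the hypothesis that $\Phi$ is a $C^1$ diffeomorphism.
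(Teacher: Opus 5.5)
Your proposal is correct and follows the paper's proof essentially step for step: apply Proposition~\ref{prop:diffeo-action} to $\Psi$, obtain $\Psi_{\bar\zeta} = -(\mu\circ\Psi)\,\overline{\Psi_\zeta}$ from $\Phi\circ\Psi = \mathrm{id}$ (so the numerator of $\mu^*$ vanishes), and collapse $J/K$ to $\overline{\Psi_\zeta}$ via $K = \Psi_\zeta(1-|\mu\circ\Psi|^2)$ and $J = |\Psi_\zeta|^2(1-|\mu\circ\Psi|^2)$. In a write-up, delete the stray intermediate claim $\Psi_{\bar\zeta} = -\mu\,\Psi_\zeta$: it is wrong as written, and you immediately replace it with the correct conjugated form anyway.
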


\begin{proof}
Apply Proposition~\ref{prop:diffeo-action} with $\Psi$ in place of $\Phi$, taking $\Omega_1 = \Omega'$ and $\Omega_2 = \Omega$. Differentiating $\Phi(\Psi(\zeta)) = \zeta$ with respect to $\bar\zeta$ and using \eqref{eq:Beltrami-uniformizing} gives the conjugate identity
\begin{equation}\label{eq:conjugate-Beltrami}
\Psi_{\bar\zeta} = -(\mu\circ\Psi)\,\overline{\Psi_\zeta}.
\end{equation}
The pullback formula \eqref{eq:diffeo-pullback} gives $\mu' = (\Psi_{\bar\zeta} + (\mu\circ\Psi)\overline{\Psi_\zeta})/K$, whose numerator vanishes by \eqref{eq:conjugate-Beltrami}. Computing $J$ and $K$ from \eqref{eq:conjugate-Beltrami}: $|\Psi_{\bar\zeta}|^2 = |\mu\circ\Psi|^2|\Psi_\zeta|^2$, so $J = |\Psi_\zeta|^2(1 - |\mu\circ\Psi|^2)$ and $K = \Psi_\zeta(1 - |\mu\circ\Psi|^2)$. The ratio collapses: $J/K = \overline{\Psi_\zeta}$, giving \eqref{eq:Vekua-coeffs}.
\end{proof}

\begin{remark}[Existence of the uniformizing diffeomorphism]\label{rem:Beltrami-existence}
The hypothesis that a $C^1$ diffeomorphism solving \eqref{eq:Beltrami-uniformizing} exists is a classical regularity question. The measurable Riemann mapping theorem of Ahlfors and Bers \cite{ahlfors} produces, for any measurable $\mu$ with $\|\mu\|_\infty < 1$, a quasiconformal homeomorphism $\Phi$ satisfying \eqref{eq:Beltrami-uniformizing} in the distributional sense. Standard Schauder theory for the Beltrami equation then upgrades $\Phi$ to a classical solution under appropriate H\"older hypotheses on the coefficient. For the propositions of this section we adopt the standing assumption $\mu \in C^{1,\alpha}(\Omega)$ for some $\alpha \in (0,1)$ together with the uniform ellipticity bound $\|\mu\|_\infty < 1$, which together are more than sufficient to guarantee that $\Phi \in C^{2,\alpha}$ globally and that all subsequent compositions and coefficient transformations are well-defined classically.
\end{remark}

\subsection{The single-PDE-solve normal form}

We now combine Proposition~\ref{prop:reduction-to-vekua} with the classical fact that the lower-order coefficient $\Acal$ in a Vekua equation can be eliminated by a gauge transformation. The key observation is that the same uniformizing diffeomorphism $\Phi$ that handles the principal part also reduces the gauge step to a flat $\bar\partial$-equation on the uniformized domain.

\begin{proposition}[Gauging away $\Acal$ in a Vekua equation]\label{prop:gauge-away-A}
Let $\Omega \subset \C$ be bounded, and let $D = (0, \Acal, \Bcal, \Fcal) \in \BV(\Omega)$ be a Vekua equation with $\Acal \in C^{0,\alpha}(\overline\Omega)$ for some $\alpha \in (0,1)$. There exists a gauge $\phi \in \Gcal(\Omega)$ such that
\[
\phi \cdot D = (0,\, 0,\, \Bcal\,\tfrac{\phi}{\bar\phi},\, \phi \Fcal).
\]
Specifically, $\phi := \exp(T\Acal)$ where $T$ is the Cauchy transform $(T\Acal)(z) := -\tfrac{1}{\pi}\int_\Omega \tfrac{\Acal(\zeta)}{\zeta - z}\,d\xi\,d\eta$. (The boundedness of $\Omega$ guarantees convergence of the Cauchy integral; on unbounded domains the construction yields only local gauges on bounded subdomains.)
\end{proposition}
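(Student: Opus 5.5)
The plan is to apply the gauge action of Proposition~\ref{prop:gauge-action} with $\mu = 0$. There the transformed coefficient is $\Acal' = \Acal - \phi_{\bar z}/\phi$, so killing $\Acal$ means finding a nowhere-vanishing $C^1$ function $\phi$ with $\phi_{\bar z}/\phi = \Acal$. Writing $\phi = e^{h}$ turns this logarithmic-derivative equation into the flat equation $h_{\bar z} = \Acal$ on $\Omega$. Since $\phi = e^h$ never vanishes, $\phi \in \Gcal(\Omega)$ as soon as $h \in C^1(\Omega)$. The remaining components then follow directly from \eqref{eq:gauge-action}: $\mu' = \mu = 0$, $\Bcal' = \Bcal\,\phi/\bar\phi$, and $\Fcal' = \phi\Fcal$.

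The key step is therefore the classical fact about the Cauchy transform on a bounded domain (Vekua's operator $T$). With the stated normalization $(T\Acal)(z) = -\frac{1}{\pi}\int_\Omega \frac{\Acal(\zeta)}{\zeta - z}\,d\xi\,d\eta$, I will invoke three properties:
\begin{enumerate}
\item For bounded $\Omega$ and bounded $\Acal$, the integral converges absolutely, because $1/|\zeta - z|$ is locally integrable in the plane. Moreover $T\Acal$ is H\"older continuous on $\C$.
\item If $\Acal \in C^{0,\alpha}(\overline\Omega)$, then $T\Acal \in C^{1,\alpha}(\Omega)$, and $\partial_{\bar z} T\Acal = \Acal$ holds classically in $\Omega$. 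Its $z$-derivative is the Beurling transform $\Pi\Acal$, a Calder\'on--Zygmund principal-value integral.
\item Consequently $h := T\Acal$ is $C^1$ and satisfies $h_{\bar z} = \Acal$ pointwise.
\end{enumerate}
I will cite these from \cite[Ch.~I, \S 5--8]{vekua} rather than reprove them. The sign convention matters here: $\frac{1}{\pi z}$ is the fundamental solution of $\partial_{\bar z}$, which gives $\partial_{\bar z}(T\Acal) = +\Acal$ with the minus sign and $\zeta - z$ ordering as written. I will check this against the distributional identity $\partial_{\bar z}\frac{1}{\pi z} = \delta$.

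Assembling the pieces: set $\phi = \exp(T\Acal)$. Then $\phi$ is $C^1$ and nowhere zero, and $\phi_{\bar z}/\phi = (T\Acal)_{\bar z} = \Acal$. Hence $\Acal' = \Acal - \Acal + 0\cdot\phi_z/\phi = 0$, and $\phi\cdot D = (0,0,\Bcal\phi/\bar\phi,\phi\Fcal)$ as claimed. The transformed data are continuous, so $\phi \cdot D$ lies in $\BV(\Omega)$.

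The only genuine obstacle is the second property above, namely that the H\"older hypothesis upgrades $T\Acal$ from merely H\"older to $C^1$. For merely continuous $\Acal$ this fails: $T\Acal$ then lies only in $W^{1,p}$, and $\phi$ would not be a classical gauge. This is exactly why the hypothesis $\Acal\in C^{0,\alpha}(\overline\Omega)$ is imposed. If a self-contained argument were wanted, I would first approximate the density near the point $z$ by the constant $\Acal(z)$ and differentiate under the integral away from it. I would then control the singular remainder with the H\"older modulus $|\Acal(\zeta)-\Acal(z)|\le C|\zeta-z|^\alpha$, which makes the differentiated kernel integrable.
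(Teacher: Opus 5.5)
Your proposal is correct and follows the paper's proof. Both arguments reduce $\Acal'=0$ under the $\mu=0$ gauge law to $\bar\partial\log\phi=\Acal$, solve it with $\phi=\exp(T\Acal)$, and cite Vekua for the fact that H\"older continuity of $\Acal$ on the bounded domain makes $T\Acal$ a $C^1$ solution of $\bar\partial(T\Acal)=\Acal$ in $\Omega$; note that the paper asserts only $C^1_{\mathrm{loc}}(\Omega)$ and avoids regularity up to $\partial\Omega$, which is how your $C^{1,\alpha}(\Omega)$ should be read.
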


\begin{proof}
With $\mu = 0$, the gauge formula \eqref{eq:gauge-action} for $\Acal$ collapses to $\Acal' = \Acal - \phi_{\bar z}/\phi$. Setting $\Acal' = 0$ requires $\bar\partial \log\phi = \Acal$. For Hölder continuous $\Acal$ on a bounded $\Omega$, the Cauchy transform $T\Acal$ belongs to $C^1_{\mathrm{loc}}(\Omega)$ and satisfies $\bar\partial(T\Acal) = \Acal$ in $\Omega$ (see \cite[Ch.~I, \S 6]{vekua}; closure regularity up to $\partial\Omega$ would require additional boundary regularity which we do not need here). Hence $\phi := \exp(T\Acal)$ is a $C^1$ nowhere-vanishing function on $\Omega$ satisfying the equation.
\end{proof}

\begin{proposition}[Recycling the Beltrami uniformization]\label{prop:single-solve}
Let $\Omega \subset \C$ be a bounded domain, $D = (\mu, \Acal, \Bcal, \Fcal) \in \BV(\Omega)$ with $\mu \in C^{1,\alpha}$ for some $\alpha \in (0,1)$ and $\|\mu\|_\infty < 1$, and let $\Phi: \Omega \to \Omega'$ be a $C^{2,\alpha}$ diffeomorphism solving the Beltrami equation \eqref{eq:Beltrami-uniformizing}. Assume that $\Omega' = \Phi(\Omega)$ is bounded\footnote{This is automatic when $\Omega$ is bounded and $\Phi$ extends continuously to $\overline\Omega$, which holds under the stated regularity. For unbounded domains the construction below gives only \emph{local} gauges on bounded subdomains; a global statement requires an additional global $\bar\partial$-solvability or patching argument, which we do not pursue here.} and that the pulled-back coefficient $\Acal' := \overline{\Psi_\zeta}(\Acal\circ\Psi)$ is H\"older continuous on $\overline{\Omega'}$. Then the composite reduction of $D$ to the form $(0, 0, \tilde\Bcal, \tilde\Fcal)$ requires the solution of only one variable-principal-part PDE --- the Beltrami equation \eqref{eq:Beltrami-uniformizing} --- followed by the standard Cauchy-transform solution of a flat $\dbar$-equation on $\Omega'$. The same diffeomorphism $\Phi$ that uniformizes the principal part supplies the integrating factor for the gauge step:
\begin{equation}\label{eq:phi-recycled}
\phi := \exp\bigl((T\Acal') \circ \Phi\bigr).
\end{equation}
\end{proposition}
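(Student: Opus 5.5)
The plan is to compose Proposition~\ref{prop:reduction-to-vekua} with Proposition~\ref{prop:gauge-away-A} on $\Omega'$, and then transport the resulting gauge back to $\Omega$ through $\Phi$. This requires checking that the diffeomorphism pullback and the gauge action commute in the appropriate sense.

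\textbf{Step 1: uniformize.} By hypothesis $\Phi$ solves \eqref{eq:Beltrami-uniformizing}; this is the only variable-coefficient PDE solve. Proposition~\ref{prop:reduction-to-vekua} then gives $\Psi^*D = (0,\Acal',\Bcal',\Fcal')$ on $\Omega'$, with coefficients \eqref{eq:Vekua-coeffs}.

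\textbf{Step 2: flat gauge on $\Omega'$.} Since $\Omega'$ is bounded and $\Acal'$ is H\"older continuous on $\overline{\Omega'}$, Proposition~\ref{prop:gauge-away-A} applies. It gives $\chi := \exp(T\Acal')\in\Gcal(\Omega')$ with $\chi\cdot\Psi^*D = (0,0,\Bcal'\chi/\bar\chi,\chi\Fcal')$. This step solves only the constant-coefficient equation $\partial_{\bar\zeta}\log\chi = \Acal'$ by the Cauchy transform.

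\textbf{Step 3: transport the gauge back to $\Omega$.} Set $\phi := \chi\circ\Phi$, which is \eqref{eq:phi-recycled}. It is $C^1$ and nowhere zero, so $\phi\in\Gcal(\Omega)$. The key identity to verify is
\[
\Psi^*(\phi\cdot D) = (\phi\circ\Psi)\cdot \Psi^* D = \chi\cdot\Psi^*D .
\]
Equivalently, gauge and pullback commute when the gauge is pulled back alongside. I would prove this at the level of equations rather than coefficients. If $w$ solves $D$, then $\phi w$ solves $\phi\cdot D$. Moreover $(\phi w)\circ\Psi = \chi\,(w\circ\Psi)$, and $w\circ\Psi$ solves $\Psi^*D$. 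Both sides are therefore the equation obtained from $\Psi^*D$ under the substitution $\tilde w\mapsto\chi\tilde w$.

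A direct check is also available. Under \eqref{eq:diffeo-pullback}, $\Bcal$ and $\Fcal$ pick up the common factor $J/K$, so the factors $\phi/\bar\phi$ and $\phi$ pass through unchanged. For $\Acal$, I would verify $J\bigl(-\phi_{\bar z}/\phi+\mu\phi_z/\phi\bigr)\circ\Psi/K = -\chi_{\bar\zeta}/\chi$ (using $\mu^*=0$). This follows from the chain rule inversion in the proof of Proposition~\ref{prop:diffeo-action}, applied to $\log\phi$ in place of $w$. Consequently $\phi\cdot D \sim_d (0,0,\tilde\Bcal,\tilde\Fcal)$ with $\tilde\Bcal=\Bcal'\chi/\bar\chi$ and $\tilde\Fcal=\chi\Fcal'$. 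The whole reduction therefore used exactly one Beltrami solve plus one Cauchy transform.

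The main obstacle I expect is the commutation identity in Step~3, specifically its $\Acal$-component. There the logarithmic derivative of $\phi$ must transform exactly as the principal part does, so that $\partial_{\bar z}-\mu\partial_z$ of $\log\phi$ corresponds to $(K/J)\,\partial_{\bar\zeta}\log\chi$. I would handle it with the solution-level argument above, which sidesteps the computation, and use the explicit Jacobian formulas $J=|\Psi_\zeta|^2(1-|\mu\circ\Psi|^2)$ and $K=\Psi_\zeta(1-|\mu\circ\Psi|^2)$ only as a check.
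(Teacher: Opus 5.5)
Your proof is correct, but it runs in the opposite order from the paper's.

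\textbf{The paper's route (gauge first, on $\Omega$).} The paper poses the twisted equation \eqref{eq:twisted-dbar} and makes the ansatz $\log\phi=g\circ\Phi$. By the chain rule it gets $(\partial_{\bar z}-\mu\partial_z)(g\circ\Phi)=(1-|\mu|^2)\,\overline{\Phi_z}\,(g_{\bar\zeta}\circ\Phi)$. It then composes with $\Psi$ and uses $\overline{\Phi_z\circ\Psi}=1/[\overline{\Psi_\zeta}(1-|\mu\circ\Psi|^2)]$. This turns the twisted equation into the flat one $g_{\bar\zeta}=\Acal'$, solved by $g=T\Acal'$.

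\textbf{Your route (diffeomorphism first).} You chain Propositions~\ref{prop:reduction-to-vekua} and~\ref{prop:gauge-away-A} as black boxes on $\Omega'$. You then transport $\chi=\exp(T\Acal')$ back to $\Omega$ via the naturality identity $\Psi^*(\phi\cdot D)=(\phi\circ\Psi)\cdot\Psi^*D$. The $\Acal$-component of that identity is exactly the paper's chain-rule computation. Because $J/K$ never vanishes, it also shows that $\phi\cdot D$ has zero $\Acal$ on $\Omega$, i.e.\ that $\phi$ solves \eqref{eq:twisted-dbar}.

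\textbf{Comparison.} In your route, Steps~1--2 alone already give the normal form. All the content specific to \eqref{eq:phi-recycled} therefore sits in Step~3. The paper puts that content up front and shows concretely how $\partial_{\bar z}-\mu\partial_z$ becomes a multiple of the flat $\partial_{\bar\zeta}$ under composition with $\Phi$. Your route instead buys modularity and a general gauge/pullback equivariance lemma, valid for any gauge and any diffeomorphism. That lemma is useful, for instance, for reordering chains in the combined relation $\sim$.

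\textbf{One tightening.} Phrase the solution-level argument at the operator level: it should hold for every $C^1$ function $w$, not only for solutions of $D$. Sharing solutions does not by itself pin down coefficients. Write $L_D w:=w_{\bar z}-\mu w_z+\Acal w+\Bcal\bar w-\Fcal$. The substitutions defining the two actions then give, identically in $w$,
\[
L_{\phi\cdot D}(\phi w)=\phi\,L_D w, \qquad L_{\Psi^*D}(w\circ\Psi)=(J/K)\,(L_D w)\circ\Psi .
\]
The same $K$ appears in both orders, because the gauge leaves $\mu$ unchanged. Both composites therefore equal $(J/K)\,(\phi\circ\Psi)\,(L_D w)\circ\Psi$ for all $w$. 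As $w$ ranges over $C^1(\Omega)$, $(\phi w)\circ\Psi$ ranges over all of $C^1(\Omega')$. Two BV operators normalized to coefficient $1$ on the $\bar\zeta$-derivative that agree on all of $C^1(\Omega')$ have the same coefficients.
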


\begin{proof}
Starting from $D$ on $\Omega$, applying the gauge step \emph{before} the diffeomorphism step would require solving
\begin{equation}\label{eq:twisted-dbar}
(\partial_{\bar z} - \mu\,\partial_z)\,\log\phi = \Acal
\end{equation}
on $\Omega$, a $\bar\partial$-equation \emph{twisted} by the variable principal part $\mu$. A naive count predicts a second variable-coefficient PDE problem. We show that the substitution $\log\phi = g \circ \Phi$, with $\Phi$ the same uniformizing diffeomorphism that solves \eqref{eq:Beltrami-uniformizing}, reduces \eqref{eq:twisted-dbar} to the flat (constant-coefficient) $\bar\partial$-equation $g_{\bar\zeta} = \Acal'$ on $\Omega'$, solvable by the standard Cauchy transform.

Substitute the ansatz $\log\phi = g \circ \Phi$ for a function $g$ on $\Omega'$. The chain rule gives
\[
\partial_{\bar z}(g\circ\Phi) = (g_\zeta \circ \Phi)\,\Phi_{\bar z} + (g_{\bar\zeta} \circ \Phi)\,\overline{\Phi_z},
\qquad
\partial_z(g\circ\Phi) = (g_\zeta\circ\Phi)\,\Phi_z + (g_{\bar\zeta}\circ\Phi)\,\overline{\Phi_{\bar z}}.
\]
Therefore, using $\Phi_{\bar z} = \mu\Phi_z$ and $\overline{\Phi_{\bar z}} = \bar\mu\,\overline{\Phi_z}$,
\[
(\partial_{\bar z} - \mu\partial_z)(g\circ\Phi) = (g_\zeta\circ\Phi)(\Phi_{\bar z} - \mu\Phi_z) + (g_{\bar\zeta}\circ\Phi)(\overline{\Phi_z} - \mu\,\bar\mu\,\overline{\Phi_z}) = (1 - |\mu|^2)\,\overline{\Phi_z}\,(g_{\bar\zeta}\circ\Phi),
\]
the first parenthesis vanishing by \eqref{eq:Beltrami-uniformizing}. So \eqref{eq:twisted-dbar} reads
\[
(1 - |\mu|^2)\,\overline{\Phi_z}\,(g_{\bar\zeta}\circ\Phi) = \Acal.
\]
Composing with $\Psi$ (which inverts $\Phi$) and using the identity $\overline{\Phi_z(\Psi)} = 1/[\overline{\Psi_\zeta}(1 - |\mu\circ\Psi|^2)]$ --- a consequence of differentiating $\Phi(\Psi(\zeta)) = \zeta$ as in the proof of Proposition~\ref{prop:reduction-to-vekua} --- the equation pulls back through $\Phi$ to
\[
g_{\bar\zeta} = \overline{\Psi_\zeta}\,(\Acal\circ\Psi) = \Acal'
\]
on $\Omega'$. Since $\Omega'$ is bounded and $\Acal'$ is H\"older continuous on $\overline{\Omega'}$, the Cauchy transform on $\Omega'$ supplies $g := T\Acal' \in C^1_{\mathrm{loc}}(\Omega')$ with $g_{\bar\zeta} = \Acal'$ in $\Omega'$, and the gauge on $\Omega$ is \eqref{eq:phi-recycled}. The only variable-coefficient PDE solved is \eqref{eq:Beltrami-uniformizing}; the integrating factor on $\Omega$ is reconstructed from $\Phi$ together with the standard $\bar\partial$-inverse on the uniformized domain.
\end{proof}

\begin{corollary}[Normal form]\label{cor:normal-form}
Under the hypotheses of Proposition~\ref{prop:single-solve}, $D$ is gauge-diffeomorphism equivalent to
\[
(0, 0, \tilde\Bcal, \tilde\Fcal)
\]
on $\Omega'$, a Vekua equation with no $w$-coupling.
\end{corollary}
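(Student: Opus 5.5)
The plan is to realize the equivalence as an explicit two-link chain in the sense of \eqref{eq:combined-equiv}: one diffeomorphism step $\sim_d$ followed by one gauge step $\sim_g$. For the first link, let $\Psi := \Phi^{-1}: \Omega' \to \Omega$ and apply Proposition~\ref{prop:reduction-to-vekua} to $D$. This gives $D \sim_d \Psi^* D = (0, \Acal', \Bcal', \Fcal')$ on $\Omega'$, where the coefficients are those of \eqref{eq:Vekua-coeffs}. The hypotheses of Proposition~\ref{prop:single-solve} ($\mu \in C^{1,\alpha}$, $\|\mu\|_\infty<1$, and $\Phi$ a $C^{2,\alpha}$ diffeomorphism solving \eqref{eq:Beltrami-uniformizing}) include everything that proposition requires.

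For the second link, the hypotheses of Proposition~\ref{prop:single-solve} state that $\Omega'$ is bounded and $\Acal'$ is H\"older continuous on $\overline{\Omega'}$. These are exactly the hypotheses of Proposition~\ref{prop:gauge-away-A}, applied to the Vekua datum $(0,\Acal',\Bcal',\Fcal')$ on $\Omega'$. It produces the gauge $\varphi := \exp(T\Acal') \in \Gcal(\Omega')$, and by \eqref{eq:gauge-action} with $\mu'=0$,
\[
\varphi\cdot\Psi^*D = \bigl(0,\,0,\,\Bcal'\,\varphi/\bar\varphi,\,\varphi\,\Fcal'\bigr).
\]
Setting $\tilde\Bcal := \Bcal'\varphi/\bar\varphi$ and $\tilde\Fcal := \varphi\Fcal'$, the chain $D \sim_d \Psi^*D \sim_g (0,0,\tilde\Bcal,\tilde\Fcal)$ proves the claim.

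As a consistency check, and to connect with \eqref{eq:phi-recycled}, I will note that the gauge can be applied on $\Omega$ first. The gauge $\phi = \varphi\circ\Phi$ kills $\Acal$ in $D$ by the computation in the proof of Proposition~\ref{prop:single-solve}. Pulling $\phi\cdot D$ back by $\Psi$ gives the same normal form, because $\tilde w = \phi w$ composed with $\Psi$ equals $\varphi\,(w\circ\Psi)$. So either order yields $(0,0,\tilde\Bcal,\tilde\Fcal)$.

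The only delicate point is regularity. The gauge must lie in $\Gcal(\Omega')$, i.e.\ be $C^1$ and nowhere zero. This holds because $T\Acal' \in C^1_{\mathrm{loc}}(\Omega')$ and the exponential never vanishes. The resulting data $\tilde\Bcal, \tilde\Fcal$ are continuous, so the normal form lies in $\BV(\Omega')$. No further estimates are needed.
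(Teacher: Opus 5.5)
Your proposal is correct and is essentially the argument the paper intends; the paper gives no separate proof, reading the corollary off Proposition~\ref{prop:single-solve}. You chain the uniformizing pullback of Proposition~\ref{prop:reduction-to-vekua} with the Cauchy-transform gauge $\exp(T\Acal')$ of Proposition~\ref{prop:gauge-away-A} on $\Omega'$. As your consistency check notes, this is the same as first applying the recycled gauge \eqref{eq:phi-recycled} on $\Omega$ and then pulling back by $\Psi$.
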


\subsection{Residual symmetries}

The normal form $(0, 0, \tilde\Bcal, \tilde\Fcal)$ is unique up to gauges and diffeomorphisms preserving both $\mu = 0$ and $\Acal = 0$. A gauge $\phi$ preserves $\Acal = 0$ iff $\phi_{\bar z}/\phi = 0$, i.e., $\phi$ is holomorphic. A diffeomorphism $\Phi$ preserves $\mu = 0$ iff $\Phi_{\bar z} = 0$, which combined with orientation-preservation makes $\Phi$ biholomorphic. The transformations preserving the normal-form conditions are therefore precisely holomorphic gauges and biholomorphic coordinate changes; their induced action on $(\tilde\Bcal, \tilde\Fcal)$, which is non-trivial, is not classified here.

\medskip
Corollary~\ref{cor:normal-form} situates classical pseudo-analytic function theory inside the BV framework: in the regular regime ($\mu \in C^{1,\alpha}$ with $\|\mu\|_\infty < 1$, bounded $\Omega$, H\"older pulled-back $\Acal'$, and existence of a global $C^{2,\alpha}$ Beltrami uniformizing diffeomorphism), every BV equivalence class admits a representative of the form $(0, 0, \tilde\Bcal, \tilde\Fcal)$, with residual freedom given by holomorphic gauges and biholomorphic coordinate changes. A full quotient-level classification --- in particular, the bookkeeping of uniformizing maps, residual symmetries, regularity strata, and domain topology --- is not pursued here; the BV form simply makes the underlying single-Beltrami-solve structure explicit.

\section{The analytic / pseudo-analytic dichotomy}\label{sec:dichotomy}

The transformation laws \eqref{eq:gauge-action} and \eqref{eq:diffeo-pullback} have an immediate consequence for the locus $\{\Bcal = 0\}$.

\begin{proposition}\label{prop:B-zero-invariant}
The locus $\{\Bcal(z) = 0\} \subset \Omega$ is preserved (as a subset, after the appropriate identification) under both gauge transformations and diffeomorphism pullbacks.
\end{proposition}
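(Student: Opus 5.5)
The plan is to read the claim off the transformation laws already established, treating the two actions separately and then composing along chains of the combined relation \eqref{eq:combined-equiv}.

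\textbf{Gauge step.} By Proposition~\ref{prop:gauge-action}, a gauge acts by $\Bcal' = \Bcal\,\phi/\bar\phi$. Since $\phi$ is nowhere zero, $\phi/\bar\phi$ is unimodular, and \eqref{eq:Bmod-gauge-inv} gives $|\Bcal'| = |\Bcal|$ pointwise on the same domain $\Omega$. Hence $\{\Bcal' = 0\} = \{\Bcal = 0\}$ as subsets of $\Omega$. In this case the ``identification'' is simply the identity.

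\textbf{Diffeomorphism step.} By Proposition~\ref{prop:diffeo-action}, pullback acts by $\Bcal^* = J\,(\Bcal\circ\Phi)/K$ on $\Omega_1$. Here $J>0$ by Definition~\ref{def:diffeo}, and $K$ is nowhere zero by the closure identity \eqref{eq:closure}, which gives $|K|^2 \ge (1-|\mu\circ\Phi|^2)J>0$. So $J/K$ is a nowhere-vanishing factor, and $\Bcal^*(z)=0$ if and only if $\Bcal(\Phi(z))=0$. This yields
\[
\{\Bcal^* = 0\} = \Phi^{-1}\bigl(\{\Bcal=0\}\bigr).
\]
This is the precise meaning of ``after the appropriate identification'': the zero locus is transported by the diffeomorphism, and because $\Phi$ is a bijection it is carried onto the corresponding zero locus.

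\textbf{Chains.} For an arbitrary finite chain of $\sim_g$ and $\sim_d$ steps, I would induct on the length of the chain. The identification becomes the composite of the diffeomorphisms that occur in the chain, with gauge steps contributing the identity. The inverse direction of a $\sim_d$ step is handled by applying the same argument to $\Phi^{-1}$, which Theorem~\ref{thm:diffeo-equiv} allows.

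There is no real obstacle. The only point needing care is that the multiplicative factors are nonvanishing everywhere, and for $K$ this is exactly the content of \eqref{eq:closure}. As a corollary, the conditions $\Bcal\equiv 0$ and $\Bcal\not\equiv 0$ are invariant, which is the dichotomy.
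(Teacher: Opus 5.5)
Your proposal is correct and follows the paper's proof: both read the claim directly off the transformation laws $\Bcal' = \Bcal\,\phi/\bar\phi$ and $\Bcal^* = (J/K)(\Bcal\circ\Phi)$, using that the multiplicative factors are nowhere zero. Your explicit check that $K \neq 0$ via the closure identity \eqref{eq:closure}, and your induction over finite chains of $\sim_g$ and $\sim_d$ steps, are sound additions that the paper leaves implicit.
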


\begin{proof}
Under gauge, $\Bcal' = \Bcal\,\phi/\bar\phi$, and $\phi/\bar\phi$ is nowhere zero, so $\Bcal'(z) = 0$ iff $\Bcal(z) = 0$. Under diffeomorphism pullback, $\Bcal^* = (J/K)(\Bcal\circ\Phi)$, and $J/K$ is nowhere zero, so $\Bcal^*(\zeta) = 0$ iff $\Bcal(\Phi(\zeta)) = 0$.
\end{proof}

\begin{definition}[Analytic / pseudo-analytic]\label{def:analytic-class}
A BV equation $D \in \BV(\Omega)$ is \emph{analytic} if $\Bcal \equiv 0$ on $\Omega$, and \emph{pseudo-analytic} otherwise.
\end{definition}

By Proposition~\ref{prop:B-zero-invariant}, both classes are saturated under $\sim$, so analyticity is a genuine invariant property of equivalence classes. The terminology follows Vekua \cite{vekua} and Bers \cite{bers}: when $\Bcal \equiv 0$ the equation $w_{\bar z} - \mu w_z + \Acal w = \Fcal$ reduces (after uniformization and gauge, via Corollary~\ref{cor:normal-form}) to a flat $\bar\partial$-equation $h_{\bar\zeta} = \tilde\Fcal$ in the new coordinate. In the homogeneous case $\Fcal \equiv 0$, this means $h$ is holomorphic and the original $w$ has the form $\phi \cdot h$ with $\phi$ the recovered integrating factor --- structurally close to honestly analytic functions. With forcing, the solutions are not holomorphic but solve a flat inhomogeneous $\bar\partial$-equation, classically tractable by Cauchy transform. The case $\Bcal \not\equiv 0$ is the genuinely pseudo-analytic regime, where no such reduction to a flat $\bar\partial$-equation is possible within the allowed gauge--diffeomorphism equivalence.

The dichotomy is, however, only Boolean. It separates two saturated subsets of the gauge-diffeomorphism quotient but says nothing about the structure within either.

\section{The invariant 2-form}\label{sec:invariant}

We now refine the Boolean dichotomy of Section~\ref{sec:dichotomy} into a continuous invariant.

\subsection{Definition and main theorem}

\begin{definition}[The 2-form $\Theta$]\label{def:Theta}
For $D = (\mu, \Acal, \Bcal, \Fcal) \in \BV(\Omega)$ define
\begin{equation}\label{eq:Theta-def}
\Theta_D := \frac{|\Bcal|^2}{1 - |\mu|^2}\, dx\, dy.
\end{equation}
This is a non-negative 2-form on $\Omega$, equivalently a non-negative Borel measure.
\end{definition}

\begin{theorem}[Main invariance]\label{thm:main-invariance}
The 2-form $\Theta$ is intrinsic to the gauge-diffeomorphism equivalence class:
\begin{enumerate}
\item[(i)] (Gauge invariance.) For any $\phi \in \Gcal(\Omega)$ and any $D \in \BV(\Omega)$,
\[
\Theta_{\phi \cdot D} = \Theta_D \quad \text{on } \Omega.
\]
\item[(ii)] (Diffeomorphism covariance.) For any diffeomorphism $\Phi: \Omega_1 \to \Omega_2$ and any $D \in \BV(\Omega_2)$,
\[
\Theta_{\Phi^* D} = \Phi^*\, \Theta_D \quad \text{on } \Omega_1.
\]
\end{enumerate}
\end{theorem}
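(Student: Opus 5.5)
Both parts reduce to pointwise identities, using the transformation laws already established in Proposition~\ref{prop:gauge-action} and Proposition~\ref{prop:diffeo-action}.

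For (i), the gauge action leaves $\mu$ unchanged, $\mu' = \mu$, so the denominator $1-|\mu|^2$ is unchanged. The numerator satisfies $|\Bcal'| = |\Bcal|$ by \eqref{eq:Bmod-gauge-inv}, because $\phi/\bar\phi$ is unimodular. Hence the densities agree pointwise on $\Omega$, and so do the 2-forms, since the coordinate area element $dx\,dy$ is untouched by a gauge.

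For (ii), the plan is to compare coefficient densities against the change-of-variables rule for 2-forms. Write $\Theta_D = \theta_D\, d\xi\, d\eta$ on $\Omega_2$, with $\theta_D = |\Bcal|^2/(1-|\mu|^2)$. Since $\Phi$ is orientation-preserving with real Jacobian determinant $J = |\Phi_z|^2 - |\Phi_{\bar z}|^2 > 0$, the pullback is
\[
\Phi^*\Theta_D = (\theta_D\circ\Phi)\, J\, dx\, dy .
\]
It therefore suffices to prove the pointwise identity $\theta_{\Phi^*D} = (\theta_D\circ\Phi)\,J$ on $\Omega_1$. From \eqref{eq:diffeo-pullback}, $|\Bcal^*|^2 = J^2\,|\Bcal\circ\Phi|^2/|K|^2$. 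From the conformal identity \eqref{eq:conformal-identity}, $1-|\mu^*|^2 = (1-|\mu\circ\Phi|^2)\,J/|K|^2$. Dividing the first by the second, the factor $|K|^{-2}$ cancels and one power of $J$ cancels, leaving
\[
\theta_{\Phi^*D} = \frac{|\Bcal\circ\Phi|^2}{1-|\mu\circ\Phi|^2}\, J ,
\]
which is exactly what is needed. The division is legitimate because $K$ is nowhere zero and $1-|\mu^*|^2>0$, both established in Proposition~\ref{prop:diffeo-action}.

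No step is genuinely hard, since the conformal identity carries all the content. The only point needing care is the bookkeeping of the pullback convention: the Jacobian appearing in the change of variables for $dx\,dy$ must be the same $J$ as in Definition~\ref{def:diffeo}. This holds because $\det D\Phi = |\Phi_z|^2-|\Phi_{\bar z}|^2$ for a map written in Wirtinger form. I would also verify, as a consistency check, that the powers of $J$ match: $J^2$ in the numerator, $J$ in the denominator, leaving one factor of $J$, as required.
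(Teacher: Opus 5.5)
Your proposal is correct and follows the paper's proof essentially line for line. Gauge invariance comes from $\mu'=\mu$ and the unimodularity of $\phi/\bar\phi$. Diffeomorphism covariance comes from dividing $|\Bcal^*|^2 = J^2|\Bcal\circ\Phi|^2/|K|^2$ by the conformal identity \eqref{eq:conformal-identity} and matching the surviving factor $J$ with the Jacobian in the pullback of the area form.
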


\begin{proof}
\emph{(i)} Under gauge, $|\mu'| = |\mu|$ and $|\Bcal'| = |\Bcal|$ (the unimodular factor $\phi/\bar\phi$ has modulus 1). Hence
\[
\Theta_{\phi\cdot D} = \frac{|\Bcal'|^2}{1 - |\mu'|^2}\, dx\, dy = \frac{|\Bcal|^2}{1 - |\mu|^2}\, dx\, dy = \Theta_D.
\]

\emph{(ii)} We compute $\Theta_{\Phi^* D}$ on $\Omega_1$ and compare with $\Phi^* \Theta_D$.

From \eqref{eq:diffeo-pullback}, $\Bcal^* = (J/K)(\Bcal\circ\Phi)$, so
\begin{equation}\label{eq:Bstar-mod}
|\Bcal^*|^2 = \frac{J^2}{|K|^2}\,|\Bcal\circ\Phi|^2.
\end{equation}
The conformal identity \eqref{eq:conformal-identity} gives $1 - |\mu^*|^2 = (1 - |\mu\circ\Phi|^2)\,J/|K|^2$. Dividing \eqref{eq:Bstar-mod} by this:
\begin{equation}\label{eq:Theta-pulled}
\frac{|\Bcal^*|^2}{1 - |\mu^*|^2} = \frac{(J^2/|K|^2)\,|\Bcal\circ\Phi|^2}{(1-|\mu\circ\Phi|^2)\,J/|K|^2} = \frac{J\,|\Bcal\circ\Phi|^2}{1 - |\mu\circ\Phi|^2},
\end{equation}
hence
\[
\Theta_{\Phi^* D} = \frac{J\,|\Bcal\circ\Phi|^2}{1 - |\mu\circ\Phi|^2}\,dx_1\,dy_1 \quad \text{on } \Omega_1.
\]
On the other hand, the pullback of $\Theta_D = |\Bcal|^2/(1-|\mu|^2)\,dx_2\,dy_2$ by $\Phi$ multiplies the area form by $J$, giving
\[
\Phi^*\Theta_D = \frac{|\Bcal\circ\Phi|^2}{1 - |\mu\circ\Phi|^2}\,J\,dx_1\,dy_1,
\]
which agrees with $\Theta_{\Phi^*D}$.
\end{proof}

\begin{proposition}[Uniqueness of the density]\label{prop:density-uniqueness}
Let $H: [0,1) \to (0, \infty)$, and consider the density
\[
\Theta^{(H)}_D := |\Bcal|^2\, H(|\mu|^2)\, dx\, dy.
\]
Then $\Theta^{(H)}$ is gauge-invariant and diffeomorphism-covariant in the
sense of Theorem~\ref{thm:main-invariance} if and only if
\[
H(s) = \frac{C}{1 - s}, \qquad C > 0.
\]
In particular, up to a positive multiplicative constant, $\Theta$ is the
unique density of the displayed multiplicative form
$|\Bcal|^2\,H(|\mu|^2)\,dx\,dy$ that is intrinsic to the gauge-diffeomorphism
equivalence class. (No regularity hypothesis on $H$ is needed; the
functional equation derived below forces the stated form pointwise.)
\end{proposition}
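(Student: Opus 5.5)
The "if" direction is immediate: for $H(s) = C/(1-s)$ we have $\Theta^{(H)} = C\,\Theta$, so gauge invariance and diffeomorphism covariance follow from Theorem~\ref{thm:main-invariance}. Gauge invariance holds for any $H$ anyway, since $|\mu'| = |\mu|$ and $|\Bcal'| = |\Bcal|$ by \eqref{eq:gauge-action}. So the whole content lies in the "only if" direction for diffeomorphism covariance. We plan to extract a functional equation for $H$ by testing covariance on well-chosen data and diffeomorphisms, and then solve it pointwise.

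For the necessity direction, we compare the two sides using \eqref{eq:diffeo-pullback} and the conformal identity \eqref{eq:conformal-identity}. Covariance requires, at every point of $\Omega_1$,
\[
\frac{J^2}{|K|^2}\,|\Bcal\circ\Phi|^2\, H(|\mu^*|^2) = J\,|\Bcal\circ\Phi|^2\, H(|\mu\circ\Phi|^2).
\]
We choose data with $\Bcal \neq 0$ at the relevant point, for instance $\Bcal \equiv 1$. This gives
\[
H(|\mu^*|^2)\,\frac{J}{|K|^2} = H(|\mu\circ\Phi|^2).
\]
Substituting $J/|K|^2 = (1-|\mu^*|^2)/(1-|\mu\circ\Phi|^2)$ from \eqref{eq:conformal-identity}, we obtain
\[
(1-|\mu^*|^2)\,H(|\mu^*|^2) = (1-|\mu\circ\Phi|^2)\,H(|\mu\circ\Phi|^2).
\]
In other words, the function $G(s) := (1-s)H(s)$ satisfies $G(|\mu^*|^2) = G(|\mu\circ\Phi|^2)$ at every point, for every admissible choice of data and diffeomorphism.

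It then suffices to show that every pair of values $s, t \in [0,1)$ can occur as $(|\mu\circ\Phi|^2, |\mu^*|^2)$ at some point. Once that is shown, $G$ is constant, say $G \equiv C$, and positivity of $H$ gives $C > 0$, so $H(s) = C/(1-s)$. To realize a given pair, we take $\mu \equiv 0$ on $\Omega_2$ and a real-linear map $\Phi(z) = z + k\bar z$ with $|k|<1$. Then $\mu^* = \Phi_{\bar z}/\Phi_z = k$ and $|\mu\circ\Phi|^2 = 0 = s$, which yields $G(|k|^2) = G(0)$ for every $|k|<1$. This already covers all of $[0,1)$ with the single reference value $0$, so no further pairs are needed and no regularity of $H$ enters. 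The map $\Phi$ is a linear diffeomorphism of $\C$ (restricted to any domain) with $J = 1-|k|^2 > 0$.

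The main obstacle is only bookkeeping: confirming that linear maps with constant $\mu$ are legitimate elements of the diffeomorphism action in Proposition~\ref{prop:diffeo-action}, and that the constant data $(0,0,1,0)$ lies in $\BV$. Both are clear from the definitions.
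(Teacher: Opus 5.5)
Your proof is correct and is essentially the paper's argument. Both proofs use \eqref{eq:Bstar-mod} and the conformal identity \eqref{eq:conformal-identity} to reduce diffeomorphism covariance to a pointwise functional equation, then evaluate it on constant data with $\Bcal \equiv 1$ at a point where a diffeomorphism relates a Beltrami coefficient of modulus $\sqrt{s}$ to a vanishing one. The only difference is direction. The paper pulls back constant $\mu$ with $|\mu|^2 = s$ to $\mu^* = 0$ (its ``conformal endpoint'' $r = 1/(1-s)$), whereas you pull back $\mu \equiv 0$ by the explicit global linear map $z + k\bar z$ to get $\mu^* = k$. Your explicit map makes the realizability step fully concrete, and your rewriting in terms of $(1-s)H(s)$ makes the invariance transparent.
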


\begin{proof}
\emph{Sufficiency.} Theorem~\ref{thm:main-invariance} is exactly the case
$C = 1$, and any positive constant multiple of an invariant density is
itself invariant.

\emph{Necessity.} Gauge-invariance of $\Theta^{(H)}$ follows from the gauge
laws \eqref{eq:gauge-action} for any choice of $H$, since $|\Bcal|$ and
$|\mu|$ are both gauge-invariant. The constraint comes from
diffeomorphism covariance.

Fix any $D = (\mu, \Acal, \Bcal, \Fcal) \in \BV(\Omega_2)$ with $\Bcal$
nowhere zero. Let $\Phi: \Omega_1 \to \Omega_2$ be an
orientation-preserving diffeomorphism. By
Proposition~\ref{prop:diffeo-action} and identity~\eqref{eq:Bstar-mod},
\[
|\Bcal^*|^2 = \frac{J^2}{|K|^2}\, |\Bcal \circ \Phi|^2,
\]
and by the conformal identity~\eqref{eq:conformal-identity},
\[
|\mu^*|^2 = 1 - (1 - |\mu\circ\Phi|^2)\,\frac{J}{|K|^2}.
\]
Therefore
\[
\Theta^{(H)}_{\Phi^*D} = \frac{J^2}{|K|^2}\, |\Bcal\circ\Phi|^2\,
H\!\left(1 - (1-|\mu\circ\Phi|^2)\,\frac{J}{|K|^2}\right) dx_1\,dy_1.
\]
The pullback of $\Theta^{(H)}_D = |\Bcal|^2\, H(|\mu|^2)\, dx_2\,dy_2$
multiplies the area form by $J$, giving
\[
\Phi^* \Theta^{(H)}_D = J\, |\Bcal\circ\Phi|^2\, H(|\mu\circ\Phi|^2)\,
dx_1\,dy_1.
\]
Equating the coefficients of $dx_1\,dy_1$ and dividing by the nowhere-zero
factor $|\Bcal\circ\Phi|^2$:
\begin{equation}\label{eq:H-functional-clean}
\frac{J}{|K|^2}\, H\!\left(1 - (1-|\mu\circ\Phi|^2)\,\frac{J}{|K|^2}\right)
= H(|\mu\circ\Phi|^2).
\end{equation}
Set $s := |\mu \circ \Phi|^2 \in [0, 1)$ and $r := J/|K|^2$. The
closure identity~\eqref{eq:closure} gives
$|K|^2 \geq (1 - |\mu\circ\Phi|^2)\,J$, hence $r \leq 1/(1-s)$, with
equality precisely when $\Phi_{\bar z} + (\mu\circ\Phi)\overline{\Phi_z}
= 0$ at the point in question --- equivalently, when $\mu^* = 0$ there.
This conformal endpoint $r = 1/(1-s)$ is realizable: for any
$s_0 \in [0,1)$, take $\mu$ constant of modulus $\sqrt{s_0}$, $\Bcal
\equiv 1$, and choose $\Phi$ at a chosen point so that
$\Phi_{\bar z} = -(\mu\circ\Phi)\,\overline{\Phi_z}$ holds there
(this requires only $|\Phi_{\bar z}/\Phi_z| = \sqrt{s_0}$ at the point,
which is compatible with $J > 0$). Equation~\eqref{eq:H-functional-clean}
therefore holds at every $s \in [0,1)$ for at least the value
$r = 1/(1-s)$:
\begin{equation}\label{eq:H-functional-rs}
r\, H(1 - (1-s)r) = H(s).
\end{equation}
At $r = 1/(1-s)$, the argument of $H$ on the left becomes
$1 - (1-s)\cdot 1/(1-s) = 0$, so
\[
\frac{1}{1-s}\, H(0) = H(s),
\]
giving $H(s) = H(0)/(1-s)$ for every $s \in [0, 1)$. Setting $C := H(0) > 0$
yields the claimed form.

One verifies directly that this $H$ satisfies
\eqref{eq:H-functional-rs} for every admissible pair $(s, r)$, not only
at the conformal endpoint: $r \cdot C/((1-s)r) = C/(1-s) = H(s)$. Hence
the equation is consistent throughout the full range of $r$, and no
sub-conformal data are needed in the argument.
\end{proof}

\subsection{Why \texorpdfstring{$\Fcal$}{F} does not yield an analogous mass density}

\begin{lemma}\label{lem:no-F-invariant}
There is no expression of the form
\[
\Theta_F := \frac{|\Fcal|^2}{H(\mu, \Acal, \Bcal)}\,dx\,dy,
\]
with $H$ a continuous, nowhere-vanishing function of its arguments on the relevant data range, that is gauge-invariant for all non-trivial $\Fcal$.
\end{lemma}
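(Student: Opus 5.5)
The plan is to argue by contradiction with a two-gauge test that exploits the additive freedom in the $\Acal$-law of \eqref{eq:gauge-action}. Suppose $\Theta_F = |\Fcal|^2/H(\mu,\Acal,\Bcal)\,dx\,dy$ were gauge-invariant for every datum with $\Fcal\not\equiv 0$. Then for every $D$ and every $\phi\in\Gcal(\Omega)$, Proposition~\ref{prop:gauge-action} would give, pointwise on $\Omega$,
\[
\frac{|\phi|^2\,|\Fcal|^2}{H\bigl(\mu,\ \Acal - \phi_{\bar z}/\phi + \mu\,\phi_z/\phi,\ \Bcal\,\phi/\bar\phi\bigr)} = \frac{|\Fcal|^2}{H(\mu,\Acal,\Bcal)} .
\]
Fix a point $z_0$ with $\Fcal(z_0)\neq 0$ and divide by $|\Fcal(z_0)|^2$. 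This yields a functional identity relating the values of $H$ at two points of data space, weighted by $|\phi(z_0)|^2$.

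The key step is to choose gauges that leave the arguments of $H$ unchanged at $z_0$ but rescale $|\phi(z_0)|$. Take $\phi\equiv c$, a positive real constant. Then $\phi_z=\phi_{\bar z}=0$, so $\Acal'=\Acal$. Also $\phi/\bar\phi=1$, so $\Bcal'=\Bcal$, and trivially $\mu'=\mu$. The identity therefore collapses to $c^2/H = 1/H$ with the same nonzero value of $H$ on both sides. This forces $c^2=1$, which contradicts any choice $c\neq 1$, for instance $c=2$. Since $H$ is nowhere vanishing, division is legitimate, and continuity of $H$ is not even needed. So the obstruction is exactly the one flagged in the introduction: $\Fcal$ scales by the unconstrained modulus $|\phi|$, while every argument of $H$ is blind to a constant real rescaling.

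The only point requiring care is the quantifier ``for all non-trivial $\Fcal$.'' One needs at least one admissible datum with $\Fcal(z_0)\neq 0$ at which $H$ is defined, and such data exist trivially: take $\mu=0$, $\Acal=\Bcal=0$, $\Fcal\equiv 1$ on any $\Omega$. I would also remark that the argument does not depend on the dependence of $H$ on $(\mu,\Acal,\Bcal)$ being special. Allowing $H$ to depend on derivatives of the data, or on $z$, changes nothing, because constant gauges fix all of these. This explains why the lemma is phrased ``at this algebraic level.'' I expect no real obstacle. The main thing is to state clearly that a constant gauge is an element of $\Gcal(\Omega)$, which holds since it is $C^1$ and nowhere zero.
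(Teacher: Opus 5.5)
Your argument is correct and is essentially the paper's proof: a positive real constant gauge $\phi \equiv c$ fixes $\mu$, $\Acal$, $\Bcal$ exactly while rescaling $|\Fcal|^2$ by $c^2$, so $\Theta_F \mapsto c^2\,\Theta_F$ and invariance fails wherever $\Fcal \neq 0$. Your opening description of a ``two-gauge test'' exploiting the additive freedom in the $\Acal$-law does not match what you actually do; like the paper, you use a single constant gauge and never need that freedom.
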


\begin{proof}
Take a positive real constant gauge $\phi \equiv r$ for some $r > 0$. Since $\phi_z = \phi_{\bar z} = 0$ and $r/\bar r = 1$, the gauge transformation law \eqref{eq:gauge-action} reduces to
\[
\mu' = \mu, \qquad \Acal' = \Acal, \qquad \Bcal' = \Bcal, \qquad \Fcal' = r\,\Fcal.
\]
Thus $\mu, \Acal, \Bcal$ are \emph{exactly} unchanged (not merely up to phase), so any continuous $H(\mu, \Acal, \Bcal)$ is unchanged, while $|\Fcal|^2 \mapsto r^2\,|\Fcal|^2$. Any expression $\Theta_F = |\Fcal|^2/H(\mu, \Acal, \Bcal)\,dx\,dy$ would therefore transform as $\Theta_F \mapsto r^2\,\Theta_F$ for every $r > 0$, hence cannot be gauge-invariant unless $\Fcal \equiv 0$.
\end{proof}

\begin{remark}[Scope of the lemma]\label{rem:F-scope}
Lemma~\ref{lem:no-F-invariant} rules out only an $\Fcal$-invariant of the displayed algebraic form: a non-negative density quadratic in $\Fcal$ and divided by a continuous function of the remaining BV data. It does not rule out invariants of $\Fcal$ at the level of zero-locus or differential structure. In particular, the zero locus of $\Fcal$ is preserved by both equivalences --- under gauge $\Fcal' = \phi\Fcal$ with $\phi$ nowhere zero, and under diffeomorphism pullback $\Fcal^* = (J/K)(\Fcal\circ\Phi)$ with $J/K$ nowhere zero --- so $\{\Fcal = 0\}$ is a Boolean invariant analogous to $\{\Bcal = 0\}$. The structural content of the lemma is the absence of a \emph{continuous mass-type} invariant of $\Fcal$ at the algebraic level; finer differential or cohomological invariants of the forcing remain a possibility we do not pursue here.
\end{remark}

\begin{remark}[Source of the asymmetry]\label{rem:asymmetry-source}
The gauge action multiplies $\Bcal$ by the unimodular $\phi/\bar\phi$ and $\Fcal$ by the unconstrained $\phi$. The unimodular factor cancels in $|\Bcal|^2$; no companion in the BV data cancels the $|\phi|$ factor in $|\Fcal|$. Structurally, the difference traces back to how each coefficient appears in \eqref{eq:BV-final}: $\Bcal$ multiplies $\bar w$, which under $w \mapsto \phi w$ becomes $\bar\phi \cdot \overline{\tilde w}$, contributing $\bar\phi$ to be balanced against the overall $\phi$ multiplication of the equation; $\Fcal$ has no $w$-derivative or $w$-factor to absorb a factor of $\phi$, so it transforms by $\phi$ alone.
\end{remark}

\section{The pseudo-analytic mass}\label{sec:mass}

\begin{definition}[Pseudo-analytic mass]\label{def:mass}
The \emph{pseudo-analytic mass} of $D \in \BV(\Omega)$ is
\begin{equation}\label{eq:mass-def}
\Mcal(D) := \int_\Omega \Theta_D \;=\; \int_\Omega \frac{|\Bcal|^2}{1 - |\mu|^2}\, dx\, dy \;\in\; [0, \infty].
\end{equation}
\end{definition}

\begin{theorem}[Mass invariance]\label{thm:mass-invariance}
$\Mcal$ is an invariant of the gauge-diffeomorphism equivalence class: $D_1 \sim D_2 \implies \Mcal(D_1) = \Mcal(D_2)$.
\end{theorem}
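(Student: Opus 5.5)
Since $\sim$ is generated by finitely many elementary steps of type $\sim_g$ and $\sim_d$ (see \eqref{eq:combined-equiv}), I will reduce the claim to a single step and then induct on the length of the connecting chain. Equality is transitive, and both elementary relations are symmetric. The gauge group is closed under inverses, and by Theorem~\ref{thm:diffeo-equiv} the inverse of a diffeomorphism is again a diffeomorphism. So it suffices to prove two things: $\Mcal(\phi\cdot D)=\Mcal(D)$ for $\phi\in\Gcal(\Omega)$, and $\Mcal(\Phi^*D)=\Mcal(D)$ for a diffeomorphism $\Phi:\Omega_1\to\Omega_2$ and $D\in\BV(\Omega_2)$.

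For the gauge step I will use Theorem~\ref{thm:main-invariance}(i). It gives $\Theta_{\phi\cdot D}=\Theta_D$ pointwise on the same domain $\Omega$, so the integrals over $\Omega$ agree. This holds in $[0,\infty]$, since both integrands are non-negative and identical.

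For the diffeomorphism step I will use Theorem~\ref{thm:main-invariance}(ii), namely $\Theta_{\Phi^*D}=\Phi^*\Theta_D$. Integrating over $\Omega_1$ gives
\[
\Mcal(\Phi^*D)=\int_{\Omega_1}\frac{|\Bcal\circ\Phi|^2}{1-|\mu\circ\Phi|^2}\,J\,dx_1\,dy_1 .
\]
Because $\Phi$ is a $C^1$ bijection with $C^1$ inverse and $J>0$, the change-of-variables formula identifies the right-hand side with $\int_{\Omega_2}|\Bcal|^2/(1-|\mu|^2)\,dx_2\,dy_2=\Mcal(D)$. Orientation preservation ensures that $J=|\det D\Phi|$, so no sign issue arises.

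The only point that needs care is that the masses may be infinite and the integrands are merely continuous and non-negative on open sets, not integrable. I will therefore invoke the change-of-variables theorem in its form for non-negative measurable functions, valid in $[0,\infty]$, for instance via monotone convergence over an exhaustion of $\Omega_2$ by compact sets and their preimages. This makes equality hold in the extended sense, including the case where both masses equal $+\infty$. With both elementary steps settled, induction along the chain completes the proof.
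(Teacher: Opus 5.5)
Your proposal is correct and follows essentially the same route as the paper: gauge invariance from Theorem~\ref{thm:main-invariance}(i), and diffeomorphism invariance from Theorem~\ref{thm:main-invariance}(ii) combined with the change-of-variables formula. Your reduction along chains of elementary steps, and your use of change of variables for non-negative functions with values in $[0,\infty]$, spell out details that the paper leaves implicit.
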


\begin{proof}
Gauge invariance is immediate from Theorem~\ref{thm:main-invariance}(i). For diffeomorphism invariance, Theorem~\ref{thm:main-invariance}(ii) gives $\Theta_{\Phi^*D} = \Phi^*\Theta_D$, so the integral over $\Omega_1$ of the left equals the integral over $\Omega_2$ of $\Theta_D$ by the change-of-variables formula.
\end{proof}

\begin{proposition}[Vanishing characterization]\label{prop:mass-vanishing}
On any nonempty domain $\Omega$, $\Mcal(D) = 0$ if and only if $D$ is analytic ($\Bcal \equiv 0$).
\end{proposition}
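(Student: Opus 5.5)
The proof is direct from Definition~\ref{def:mass}, using only that the integrand is a continuous, non-negative function. We treat the two directions separately.

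\emph{Analytic implies zero mass.} If $\Bcal \equiv 0$ on $\Omega$, the integrand $|\Bcal|^2/(1-|\mu|^2)$ vanishes identically. Hence $\Mcal(D) = \int_\Omega 0\,dx\,dy = 0$.

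\emph{Zero mass implies analytic.} We argue by contraposition. Suppose $\Bcal(z_0) \neq 0$ for some $z_0 \in \Omega$.
\begin{itemize}
\item Elements of $\BV(\Omega)$ have $\Bcal$ and $\mu$ continuous with $|\mu| < 1$ pointwise. So the function $h := |\Bcal|^2/(1-|\mu|^2)$ is continuous and non-negative on $\Omega$, and $h(z_0) > 0$.
\item Since $h$ is continuous and $\Omega$ is open, there is a disk $B_\rho(z_0) \subset \Omega$ on which $h \geq h(z_0)/2$.
\item Because $h \geq 0$ everywhere, $\Mcal(D) \geq \int_{B_\rho(z_0)} h \geq \tfrac{1}{2}\pi\rho^2 h(z_0) > 0$.
\end{itemize}
This holds whether or not $\Mcal(D)$ is finite, so $\Mcal(D) \neq 0$.

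The only point needing care is the positivity-on-a-neighbourhood step. It uses the continuity of $\Bcal$ and $\mu$, which is part of the definition of $\BV(\Omega)$, and the fact that the denominator $1-|\mu|^2$ is strictly positive, so it neither vanishes nor changes sign near $z_0$. Without continuity, for example with merely measurable data, one would only get $\Bcal = 0$ almost everywhere; in the present class that distinction does not arise.

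For completeness, we can add that by Theorem~\ref{thm:mass-invariance} and Proposition~\ref{prop:B-zero-invariant}, both sides of the equivalence are properties of the whole equivalence class, so the characterization is consistent with $\sim$.
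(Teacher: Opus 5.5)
Your proof is correct and follows the same route as the paper: non-negativity and continuity of the integrand $|\Bcal|^2/(1-|\mu|^2)$. Your explicit lower bound on a small disk around a point where $\Bcal \neq 0$ simply spells out the paper's step that a continuous function vanishing almost everywhere vanishes everywhere, and your closing remark about invariance under $\sim$ is harmless but not needed.
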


\begin{proof}
The integrand $|\Bcal|^2/(1-|\mu|^2)$ is non-negative and continuous; it vanishes pointwise iff $\Bcal = 0$ pointwise. The integral vanishes iff the integrand vanishes a.e., which by continuity means everywhere.
\end{proof}

\begin{remark}[Regularity dependence]\label{rem:vanishing-regularity}
The proof of Proposition~\ref{prop:mass-vanishing} relies on the standing continuity hypothesis on the BV data introduced in Section~\ref{sec:gauge}. In a lower-regularity class (for instance, BV data in $L^p$), the conclusion would read $\Bcal = 0$ almost everywhere rather than pointwise.
\end{remark}

\begin{corollary}[Separation]\label{cor:separation}
If $\Mcal(D_1) \neq \Mcal(D_2)$, then $D_1 \not\sim D_2$.
\end{corollary}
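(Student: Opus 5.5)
This is the contrapositive of Theorem~\ref{thm:mass-invariance}, and I will argue it that way. Suppose, for contradiction, that $D_1 \sim D_2$.

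By the definition \eqref{eq:combined-equiv} of the combined relation, there is then a finite chain $D_1 = E_0, E_1, \dots, E_n = D_2$. Each consecutive pair is related either by $\sim_g$, meaning $E_{k+1} = \phi \cdot E_k$ for some gauge $\phi$, or by $\sim_d$, meaning one of them is the pullback $\Phi^* E$ of the other through a diffeomorphism $\Phi$ between the respective domains.

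The plan is to show that $\Mcal$ is constant along every single link of the chain. Theorem~\ref{thm:mass-invariance} states exactly this, but I would recheck each link type directly. For a gauge link, Theorem~\ref{thm:main-invariance}(i) gives $\Theta_{\phi\cdot E} = \Theta_E$ pointwise on the same domain $\Omega$, so the two integrals coincide. For a diffeomorphism link, Theorem~\ref{thm:main-invariance}(ii) gives $\Theta_{\Phi^*E} = \Phi^*\Theta_E$.

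The change-of-variables formula for the $C^1$ diffeomorphism $\Phi$, with Jacobian $J>0$, then yields $\int_{\Omega_1}\Phi^*\Theta_E = \int_{\Omega_2}\Theta_E$. Since the integrand is non-negative and measurable, this holds in $[0,\infty]$, infinite values included. The same identity covers both directions of the link, because $\sim_d$ is symmetric by Theorem~\ref{thm:diffeo-equiv}.

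Inducting along the chain gives $\Mcal(D_1) = \Mcal(E_0) = \cdots = \Mcal(E_n) = \Mcal(D_2)$. This contradicts the hypothesis $\Mcal(D_1) \neq \Mcal(D_2)$, so $D_1 \not\sim D_2$.

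There is no real obstacle. The only point needing care is that change of variables must be applied to a non-negative, possibly non-integrable density, which is the Tonelli form of the theorem; with that noted, the result follows directly from Theorem~\ref{thm:mass-invariance}.
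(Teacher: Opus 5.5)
Your proposal is correct and follows the paper's route: the corollary is the contrapositive of Theorem~\ref{thm:mass-invariance}, which the paper proves from Theorem~\ref{thm:main-invariance}(i) for gauge links and from Theorem~\ref{thm:main-invariance}(ii) plus change of variables for diffeomorphism links. Your explicit induction along the chain and your Tonelli remark for possibly infinite mass only spell out details the paper leaves implicit.
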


The condition $\Mcal(D_1) = \Mcal(D_2)$ is necessary but not sufficient for equivalence; Example~\ref{ssec:ex-equal-mass} below exhibits two equations of equal mass that are nevertheless inequivalent.

\begin{remark}[Zero-locus and vanishing-order invariants]\label{rem:divisor}
The zero locus $\{\Bcal = 0\}$ is preserved by both equivalences (Proposition~\ref{prop:B-zero-invariant}); this is a set-theoretic invariant requiring only continuity of $\Bcal$ and the nowhere-vanishing of the gauge factor $\phi/\bar\phi$ and the pullback factor $J/K$. Multiplicity is a finer matter. In analytic or sufficiently regular settings where the local order of vanishing of $\Bcal$ at an isolated zero is well-defined and preserved under the allowed coordinate changes (for instance, when $\Bcal$ is real-analytic and one restricts to real-analytic diffeomorphisms, or when the multiplicity admits a $C^k$-stable formulation for the relevant $k$), one obtains a finer multiplicity invariant. We do not develop this further here; the mass is the focus of the present paper, and the set-theoretic zero locus suffices for the inequivalence example below.
\end{remark}

\subsection{Comparison across subregions}

If $\Omega'$ is an open subset of $\Omega$, then $\Mcal(D|_{\Omega'}) \leq \Mcal(D|_\Omega)$, with equality iff $\Bcal = 0$ a.e.\ on $\Omega \setminus \Omega'$. (When $\Omega \setminus \Omega'$ has empty interior, this condition is vacuous on the measure-theoretic level; when its interior is non-empty and $\Bcal$ is continuous, vanishing a.e.\ on the interior forces $\Bcal \equiv 0$ there.) The mass is monotone in the domain, so a positive mass on a subregion certifies non-vanishing $\Bcal$ on that subregion in a quantitative way.

\section{Examples}\label{sec:examples}

\subsection{Constant data on the disk}\label{ssec:ex-constant}

Let $\Omega = \D$ and consider the BV equation with constant data: $\mu(z) \equiv \mu_0 \in \D$, $\Bcal(z) \equiv \Bcal_0 \in \C$, and $\Acal, \Fcal$ arbitrary continuous (their values do not affect $\Theta$). Then
\[
\Theta = \frac{|\Bcal_0|^2}{1 - |\mu_0|^2}\, dx\, dy
\]
is a constant 2-form, and
\[
\Mcal(D) = \frac{|\Bcal_0|^2}{1 - |\mu_0|^2} \cdot \pi.
\]
For fixed $\mu_0$ and varying $|\Bcal_0|$, the mass takes any value in $[0, \infty)$.

\subsection{A continuous family of inequivalent equations}\label{ssec:ex-family}

Within the constant-data setting, fix $\mu_0 = 0$ and let $\Bcal_0 = t \in \R_{\geq 0}$. The BV equation
\[
D_t := (0, 0, t, 0) \quad \text{on } \D
\]
has $\Mcal(D_t) = \pi t^2$. For $t_1 \neq t_2$, $\Mcal(D_{t_1}) \neq \Mcal(D_{t_2})$, so $D_{t_1} \not\sim D_{t_2}$ by Corollary~\ref{cor:separation}. The family $\{D_t : t > 0\}$ thus consists of pairwise inequivalent pseudo-analytic equations on $\D$, parametrized by a real positive number.

In particular, the gauge-diffeomorphism quotient of $\BV(\D)$ has \emph{at least} the cardinality of the continuum within the pseudo-analytic class. The map $t \mapsto \Mcal(D_t) = \pi t^2$ is injective on the family $\{D_t : t \geq 0\}$ and continuous in the parameter, so $\Mcal$ provides an injective continuous coordinate distinguishing these equivalence classes.

\subsection{Equal mass does not imply equivalence}\label{ssec:ex-equal-mass}

The mass is a necessary but not sufficient invariant. Consider on $\D$:
\[
D_1 := (0,\, 0,\, \tfrac{1}{\sqrt{2}},\, 0), \qquad D_2 := (0,\, 0,\, z,\, 0).
\]
The first has constant $\Bcal_1 \equiv 1/\sqrt{2}$, the second has $\Bcal_2(z) = z$.

\emph{The masses agree.} For $D_1$, $\Mcal(D_1) = (1/2)\cdot \pi = \pi/2$. For $D_2$, working in polar coordinates,
\[
\Mcal(D_2) = \int_\D |z|^2\,dx\,dy = \int_0^{2\pi}\int_0^1 r^2 \cdot r\, dr\, d\theta = 2\pi \cdot \tfrac{1}{4} = \tfrac{\pi}{2}.
\]
Hence $\Mcal(D_1) = \Mcal(D_2) = \pi/2$.

\emph{The equations are inequivalent.} The function $\Bcal_1$ is nowhere zero on $\D$, while $\Bcal_2(z) = z$ has exactly one zero on $\D$, at the origin. Suppose for contradiction that $D_1 \sim D_2$. By Proposition~\ref{prop:B-zero-invariant}, the zero locus of $\Bcal$ is preserved (as a set, modulo the relabelling induced by the diffeomorphism) under the equivalence. But the zero locus of $\Bcal_1$ is empty while the zero locus of $\Bcal_2$ is $\{0\}$, a non-empty set. The gauge factor $\phi/\bar\phi$ is nowhere zero, so it cannot create or destroy zeros of $\Bcal$; a diffeomorphism $\Phi$ relabels the zero locus of $\Bcal$ via $\Phi$, which sends the empty set to the empty set bijectively, and therefore cannot account for the appearance of a zero. Contradiction.

Therefore $D_1 \not\sim D_2$ despite $\Mcal(D_1) = \Mcal(D_2)$. The mass is one invariant among several; the zero locus of $\Bcal$ is another, and they detect different aspects of the equivalence class. (No appeal to multiplicities is needed for this example: the cardinality of the zero set already separates the two equations.)

\section{Concluding remarks}\label{sec:concluding}

\subsection*{Summary}

We have shown that every smooth first-order real planar elliptic system can be written, by an algebraic and differential pipeline, as a Beltrami--Vekua equation \eqref{eq:BV-final} with data $(\mu, \Acal, \Bcal, \Fcal)$. On the resulting space, the joint action of multiplicative gauges $w \mapsto \phi w$ and orientation-preserving diffeomorphisms generates an equivalence relation $\sim$ that absorbs both classical reductions of Vekua's program. The 2-form $\Theta = |\Bcal|^2/(1-|\mu|^2)\,dx\,dy$ is intrinsic to $\sim$-classes (Theorem~\ref{thm:main-invariance}), and its total mass $\Mcal(D)$ is a $[0,\infty]$-valued invariant (Theorem~\ref{thm:mass-invariance}) vanishing exactly on the analytic class and separating a continuous family of pseudo-analytic equations (Section~\ref{ssec:ex-family}).

Two structural by-products supplement the main result. First, the classical two-stage reduction --- uniformize, then gauge --- requires only one variable-principal-part PDE solve in the BV framework: the same diffeomorphism that solves the Beltrami equation reduces the subsequent gauge step to a flat $\bar\partial$-equation (Proposition~\ref{prop:single-solve}). Second, the asymmetry between $\Bcal$ and $\Fcal$ in the gauge action --- multiplicative-by-phase versus multiplicative-by-arbitrary-scalar --- is the structural reason $|\Bcal|$ admits a moduli-theoretic 2-form invariant while $|\Fcal|$ does not at this algebraic level (Lemma~\ref{lem:no-F-invariant}).

\subsection*{Regularity scope}

The results of this paper are stated and proved in the smooth class of Definition~\ref{def:smooth-class}: principal-part coefficients in $C^1$, lower-order data and forcing in $C^0$. Section~\ref{sec:reduction} additionally requires $\mu \in C^{1,\alpha}$ and bounded domains for the Cauchy-transform step in the single-solve normal form. These are the classical $C^1$ subset of Vekua's Sobolev tier ($D_{m+1,p}$ on the principal part, $D_{m,p}$ on lower-order data; see \cite[\S 7.1]{vekua}), and the natural regularity for any diffeomorphism-based theory in this area. The algebraic portions of the construction admit a natural Sobolev formulation with pointwise identities interpreted a.e.; a full Sobolev development of the diffeomorphism action and the invariant 2-form (in particular, specifying representatives, almost-everywhere pullbacks, and the function spaces in which $\Theta$ lives) requires the standard quasiconformal apparatus and is not pursued here. Bojarski's complementary construction \cite{bojarski} works under measurability of all coefficients and produces a different complex form (with two principal-part terms); the bundle map between his form and the BV form exists --- both come from the same skeleton \eqref{eq:real-system} --- but its explicit coefficient transformation is left for a follow-up.

\subsection*{The lower-order coefficient \texorpdfstring{$\Acal$}{A}}

The proof of Lemma~\ref{lem:no-F-invariant} showed that gauge transformations can shift $\Acal$ by an arbitrary logarithmic derivative, so no pointwise algebraic invariant of $\Acal$ exists. The transformation law $\Acal \mapsto \Acal - \phi_{\bar z}/\phi + \mu\phi_z/\phi$ is, however, an additive shift by a deformed exterior derivative --- in the case $\mu = 0$, the standard gauge transformation law of a connection 1-form on a line bundle. Gauge invariants involving $\Acal$ should therefore be \emph{differential} (analogues of curvature), obtained by applying a deformed $\partial$-operator to $\Acal$. We have not pursued this construction here.

\subsection*{Connection to the variable-elliptic-structures obstruction}

The pipeline of Section~\ref{sec:BV-derivation}, building on the variable elliptic structures program \cite{ves}, derives the BV form via the intrinsic obstruction $G = G_1 + G_2 \ii$ measuring the failure of the moving generator $\ii \in \Az$ to be holomorphic on its own algebra. The obstruction $G$ and the mass $\Mcal$ live at different levels: $G$ is built from derivatives of the principal-part data $\alpha, \beta$ alone, while $\Mcal$ is built from the lower-order $\Bcal$ together with the principal-part modulus $|\mu|$. A BV equation can have $G \equiv 0$ (rigid structure) and arbitrarily large mass (Section~\ref{ssec:ex-constant} has constant $\mu$, hence $G \equiv 0$, but $\Mcal$ takes any value in $(0, \infty)$). The two invariants are independent.

\subsection*{Use of Generative AI Tools}
\medskip

The author discloses the use of Anthropic's Claude (Claude Opus 4.7,
accessed through the Claude.ai web interface during April--May 2026)
in the preparation of this manuscript. The tool was used as follows:

\begin{enumerate}
\item[(i)] \emph{Drafting and revision of expository prose.} The
introduction, remarks, and concluding section were developed in
iterative dialogue with the tool to improve clarity and organization.
All claims and their precise wording were reviewed by the author.

\item[(ii)] \emph{Stress-testing of definitions, hypotheses, and proof
arguments through dialogue.} Conversation-based review surfaced several
issues during writing, which the author then resolved. The mathematical
content of the corrections was determined by the author.

\item[(iii)] \emph{Verification of selected algebraic identities.}
The tool was prompted to expand and check expressions which the author
had derived independently; in every case the author independently
re-verified the expansion. The tool was not used to generate formulas,
derivations, or proofs.

\end{enumerate}

The tool was not used to generate research data, figures, tables,
mathematical results, proofs, or any content presented as the author's
own contribution. All mathematical statements, proofs, derivations, and
interpretations in this paper were developed and verified by the author,
who takes full responsibility for the accuracy, originality, and
integrity of all content.

\bibliographystyle{plain}

\end{document}